\numberwithin{equation}{section}
\newtheorem{trm}{Theorem}[section]
\newtheorem{prop}[trm]{Proposition}
\newtheorem{lm}[trm]{Lemma}
\theoremstyle{remark}
\newtheorem{rmk-pl}[rmk]{Remarks}
\newtheorem{ex}{Example}[section]
\theoremstyle{definition}
\newtheorem{dfn}{Definition}[section]
\def\R{\mathbb{R}}
\newcommand{\Zp}[1]{(\Z/p\Z)^{#1}}
\def\Wt{\widetilde{W}}
\def\lcm{\text{lcm}}
\def\1{\mathbb{1}}
\def\N{\mathbb{N}}
\def\Z{\mathbb{Z}}
\def\P{\mathcal{P}}
\def\Vol{\mathrm{Vol}}
\newcommand{\abs}[1]{\left\lvert #1 \right\rvert}
\newcommand{\nor}[1]{\left\lVert #1 \right\rVert}
\DeclareMathOperator{\E}{\mathbb{E}}
\DeclareMathOperator{\diam}{diam}
\title{A higher-dimensional Siegel-Walfisz theorem}
\author{Pierre-Yves Bienvenu}
\address{School of Mathematics, University of Bristol, Bristol BS8 1TW, United Kingdom}
\email{pb14917@bristol.ac.uk}
\date{\today}
\begin{document}
\maketitle
\begin{abstract}
The Green-Tao-Ziegler theorem provides asymptotics for the number of prime
tuples of the form $(\psi_1(n),\ldots,\psi_t(n))$ when
$n$ ranges among the integer vectors of a convex body $K\subset [-N,N]^d$ and $\Psi=(\psi_1,\ldots,\psi_t)$ is a system of 
affine-linear forms whose linear coefficients remain bounded (in terms of $N$).
In the $t=1$ case, the Siegel-Walfisz theorem
shows that the asymptotic still holds when the coefficients vary like a power of $\log N$. We prove a higher-dimensional (i.e. $t>1$) version of this fact. 
%To do so, we basically rely
%on the work on Green and Tao, but 
%inject new ideas to overcome
%the substantial use they make of the
%boundedness of the coefficients.

We provide natural examples where our theorem goes beyond the
one of Green and Tao, such as the count of arithmetic of progressions of step $\lfloor \log N\rfloor$ times a prime
in the primes up to $N$.
We also apply our theorem to the determination of asymptotics for the number of linear patterns in a dense subset of the primes, namely the primes $p$ for which $p-1$ is squarefree. To the best of our knowledge, this is the first such result
in dense subsets of primes save for congruence classes.

%Their proof uses the Siegel-Walfisz theorem, but their result
%does not recover it as the linear coefficients are required to
%remain bounded
%
%For their theorem to be really interesting, it is necessary to assume that $\Vol(K)\gg (\diam K)^d$ and that the coefficients of the linear part of the system remain bounded while the convex body $K$ increases. We show a relaxation of these constraints, similar to the extension of the range of application of the prime number theorem by the Siegel-Walfisz theorem.
\end{abstract}

\section{Introduction}
The prime number theorem in arithmetic progressions says that
\begin{equation}
\label{PNTAP}
\sum_{n\leq N}\Lambda(qn+b)= 1_{(q,b)=1}\frac{q}{\varphi(q)}N+o(N),
\end{equation}
where $\Lambda$ is the von Mangoldt function and $\varphi$ the Euler totient function, while $b$ and $q$ are two fixed integers. The Siegel-Walfisz theorem affirms that the asymptotic \eqref{PNTAP}  holds uniformly in $b$ and $q$ in the regime where $q=q(N)$ is a function of $N$ satisfying $q=\log^{O(1)}N$ and $b$ varies as $N\log^{O(1)}N$.

Using the notation
$$
\Lambda_q(n)=\frac{q}{\varphi(q)}1_{(n,q)=1}
$$
for $n\in\Z$ or $n\in\Z/q\Z$ and multiplicativity, we can rewrite
equation \eqref{PNTAP} as
\begin{equation}
\label{PNTAP2}
\sum_{n\leq N}\Lambda(qn+b)= N(\prod_{p\mid q}\Lambda_p(b)+o(1)).
\end{equation}
A theorem of
Green and Tao \cite{GT2},  which we now state and which relies on two conjectures later resolved by Green, Tao and Ziegler \cite{MN,GI}, may be seen as a higher-dimensional analogue of
\eqref{PNTAP2}.
\begin{trm}
\label{GT}
Let $L$ be a constant and
$\Psi=(\psi_1,\ldots,\psi_{t}):\Z^d\rightarrow \Z^t$ be a system of affine-linear forms of which no two are affinely related. 
Suppose that the coefficients of the
linear part $\dot{\Psi}$ are bounded by $L$, while the constant
coefficients $\psi_i(0)$ satisfy $\abs{\psi_i(0)}\leq LN$.
Let $K\subset [-N,N]^d$ be a convex body. Then
\begin{equation*}
\sum_{n\in\Z^d\cap K}\prod_{i=1}^{t}\Lambda(\psi_i(n))=
\beta_{\infty}\prod_p\beta_p+o_{d,t,L}(N^d),
\end{equation*}
where
$$\beta_{\infty}=\Vol(K\cap\Psi^{-1}(\R_+^t))$$
and
$$\beta_p=\E_{a\in (\Z/p\Z)^d}\prod_{i=1}^t\Lambda_p(\psi_i(a)).$$
\end{trm}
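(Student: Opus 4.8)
The plan is to run the transference method of Green and Tao: reduce the weighted count of prime tuples to an equidistribution statement measured by Gowers uniformity norms, and then feed in the two deep inputs of \cite{GI,MN} (the inverse theorem for the Gowers norms, and the orthogonality of $\mu$ to nilsequences). Since no two of the forms $\psi_i$ are affinely related and the linear coefficients are bounded by $L$, the system $\Psi$ has finite Cauchy--Schwarz complexity $s=s(d,t,L)$, the only structural parameter that will matter. The first move is the $W$-trick, which removes the bias of $\Lambda$ at small moduli: fix $w=w(N)\to\infty$ slowly, set $W=\prod_{p\le w}p$, and for $(b,W)=1$ put $\Lambda'_b(m)=\tfrac{\varphi(W)}{W}\Lambda(Wm+b)$. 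Partitioning $n\in K\cap\Z^d$ according to its residue $b'\bmod W$ rewrites the left-hand side as a sum over the boundedly many admissible tuples $b'$ of averages $\E_{m}\prod_{i=1}^t\Lambda'_{b_i}(\psi'_i(m))$ over a rescaled convex body, where $\Psi'$ has the same linear part as $\Psi$; the point is that $\Lambda'_b$ is expected to be indistinguishable from the constant function $1$ by bounded-complexity tests.

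Next I would construct, via the truncated-divisor-sum (Goldston--Yildirim / Green--Tao enveloping sieve) method, a pseudorandom measure $\nu\ge 0$ that pointwise dominates every $\Lambda'_b$ up to an absolute constant and satisfies the linear-forms and correlation conditions for the system $\Psi'$. The generalized von Neumann theorem then guarantees that, for functions bounded by $1+\nu$, replacing one factor $\Lambda'_{b_i}$ by $1$ changes the average by at most $O(\nor{\Lambda'_{b_i}-1}_{U^{s+1}})+o(1)$, uniformly in the data. Iterating this over all $t$ factors reduces everything to two tasks: (a) evaluate the model average in which every factor is replaced by $1$ --- a volume-and-local-density computation --- and (b) prove the uniformity bound $\nor{\Lambda'_b-1}_{U^{s+1}}=o(1)$.

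Task (b) is the heart of the matter and I expect it to be \emph{the main obstacle}. Suppose $\nor{\Lambda'_b-1}_{U^{s+1}}\gg 1$. By the inverse theorem $\mathrm{GI}(s)$ of \cite{GI}, the function $\Lambda'_b-1$ must then correlate, on the relevant range, with an $s$-step polynomial nilsequence of bounded complexity. On the other hand, decomposing $\Lambda$ suitably (Vaughan's identity, or $\Lambda=\mu*\log$ followed by summation by parts to dispose of the logarithm) and undoing the $W$-trick, the M\"obius--nilsequence estimate $\mathrm{MN}(s)$ of \cite{MN} shows that neither $\Lambda$ nor its Type I / Type II components correlate with bounded-complexity nilsequences; this contradiction forces $\nor{\Lambda'_b-1}_{U^{s+1}}=o(1)$. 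All the delicate bookkeeping --- how slowly $w$ must grow, how the nilmanifold dimension depends on $s$, and the effective orthogonality --- lives in this step.

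It remains to assemble task (a). Summing the model average over the admissible residues $b'\bmod W$ and carrying the normalisation $(\varphi(W)/W)^t$ produces exactly the truncated Euler product $\prod_{p\le w}\beta_p$, while the region of the rescaled body on which all the $\psi_i$ are positive contributes the archimedean factor $\beta_\infty=\Vol(K\cap\Psi^{-1}(\R_+^t))$. Because the $\psi_i$ are pairwise affinely independent with bounded coefficients, $\beta_p=1+O_{t,L}(p^{-2})$ for every large $p$, so the Euler product converges absolutely and $\prod_{p>w}\beta_p=1+o(1)$ as $w\to\infty$. Combining the pieces yields $\sum_{n\in\Z^d\cap K}\prod_{i=1}^t\Lambda(\psi_i(n))=\beta_\infty\prod_p\beta_p+o_{d,t,L}(N^d)$, as claimed.
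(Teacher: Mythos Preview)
Your outline is a faithful high-level summary of the Green--Tao transference machine, but note that the paper does not prove Theorem~\ref{GT} at all: it is quoted as a known result from \cite{GT2}, with the two conjectural inputs supplied by \cite{MN} and \cite{GI}. There is therefore no ``paper's own proof'' to compare against for this particular statement; the paper's contribution is Theorem~\ref{mytrm}, which \emph{extends} Theorem~\ref{GT} to linear coefficients of polylogarithmic size, and it is in proving that extension that the paper re-runs and modifies the argument you describe.

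That said, a few points in your sketch deserve tightening. First, after partitioning into residues $b'\bmod W$ there are $W^d$ classes, not ``boundedly many''; since $W\to\infty$ with $N$, the bookkeeping is that one averages over these classes rather than summing over $O(1)$ of them. Second, you have omitted the reduction to \emph{$s$-normal form} (see \cite[Lemma~4.4]{GT2}), which is what actually lets the Cauchy--Schwarz argument in the generalised von Neumann theorem isolate a single form. Third, in the Green--Tao paper the von Neumann step is carried out after embedding into a cyclic group $\Z/N'\Z$, which gives translation invariance and lets one normalise coefficients; your sketch implicitly assumes this works directly on the convex body. None of these is fatal to the plan---they are exactly the steps \cite{GT2} carries out---but they are not automatic either, and the present paper's main technical work (Sections~3--6) is precisely in showing how to execute them when the coefficients are unbounded and the cyclic-group embedding is no longer available.
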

Above we have used the notation
$$\E_{a\in A}f(a)=\frac{1}{\abs{A}}\sum_{a\in A}f(a)$$
to denote the averaging operator. We also agree that the letter $p$ is reserved
for primes, thus $\prod_p$ implicitly means $\prod_{p\in\P}$.
The factors $\beta_p$ are known as \emph{local factors}.

An interesting extension was obtained by the same authors, together with Ford and Konyagin \cite{GTFK}. They showed that Theorem \ref{GT} was still valid when the constant coefficients satisfied the less restrictive condition $\abs{\psi_i(0)}\leq N\log^C N$. This relaxed condition recently allowed Tao and Ziegler \cite[Theorem 1.3]{TZ} to obtain an improvement of the error term $o(N^d)$ to $o(\Vol(K))$ in the case where $K=[N]\times [M]^{d-1}$ with $M\gg N\log^{-O(1)}N$ and $\psi_i(n)=n_1+P_i(n_2,\ldots,n_{d})$ for some affine-linear forms $P_1,\ldots,P_t$ whose linear coefficients are bounded. 

Here we prove a further extension, allowing unbounded linear coefficients. The impetus for the work came from a discussion on Tao's blog \cite{blog}. Before stating our theorem,
we collect several definitions pertaining to systems of
affine-linear forms.
%first define the \emph{size at scale $(N,B)$} of a system of
%affine-linear forms $\Psi=(\psi_1,\ldots,\psi_t)$ as
%$$
%\nor{\Psi}_{N,B}=\frac{1}{\log^BN}\left(\sum_{i\in[t]}\abs{\frac{\psi_i(0)}{N}}+\sum_{i\in[t],j\in[d]}\abs{\dot{\psi}_i(e_j)}\right)
%$$
%where $(e_1,\ldots,e_d)$ is the canonical basis of the lattice $\Z^d$.
%This differs from the original definition of Green and Tao \cite[Definition 1.1]{GT2} by the initial factor $\log^B N$.
%We shall now collect several definitions pertaining to systems of
%affine-linear forms.
\begin{dfn}
\label{dfn:systems}
A system $\Psi=(\psi_1,\ldots,\psi_{t}):\Z^d\rightarrow \Z^t$ with $d,t\geq 2$ of affine-linear forms 
has \emph{finite complexity} if
no form is in the affine-linear span of another one.
It is called \emph{admissible}
if it is of finite complexity and if $\beta_p\neq 0$ for all $p$.
In particular, if $\Psi$ is admissible,
no prime $p$ divides all coefficients of any form $\psi_i$. 
A system $\Psi'=(\psi_{i_1},\ldots,\psi_{i_s})$ for some sequence
$1\leq i_1<\ldots<i_s\leq t$ is called a \emph{subsystem} of $\Psi$.

The system $\Psi$ is called \emph{bounded} when all its linear coefficients are bounded (in terms of the asymptotic parameter $N$) and unbounded otherwise. Its \emph{size at scale $(N,B)$}
is defined as
$$
\nor{\Psi}_{N,B}=\frac{1}{\log^BN}\left(\sum_{i\in[t]}\abs{\frac{\psi_i(0)}{N}}+\sum_{i\in[t],j\in[d]}\abs{\dot{\psi}_i(e_j)}\right)
$$
where $(e_1,\ldots,e_d)$ is the canonical basis of the lattice $\Z^d$ and $\dot{\psi}_i$ denotes the linear part of $\psi_i$.

Furthermore, a prime $p$ is called \emph{exceptional} for $\Psi$ (and we write $p\in P_{\Psi}$) if there exist $i\neq j$ such that $\psi_i$ and $\psi_j$ are affinely related modulo $p$. In particular, if a form $\psi_i$ is a nonzero constant modulo $p$, then $p$ is exceptional.
\end{dfn}
We highlight that our definition of exceptional prime is different (less restrictive) than that of Green and Tao \cite[Theorem D.3]{GT2} and that the size of a system differs from their definition by  the initial factor $\log^B N$.

We now check that $\prod_p\beta_p$ is still convergent in the setting where the coefficients are unbounded.
Thus the next lemma plays the role of \cite[Lemma 1.3]{GT2}.
\begin{lm}
\label{lm:convBetap}
Let $\Psi$ be a system of affine-linear forms.
 Then if $p$ is not exceptional,
$$
\beta_p=1+O_{d,t}(p^{-2}).
$$
In particular, if $\Psi$ is admissible, the product $\prod_p\beta_p$ is convergent and nonzero.
\end{lm}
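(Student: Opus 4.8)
The plan is to compute $\beta_p$ explicitly by a Chinese-remainder-type argument on the individual forms, using the definition $\Lambda_p(n)=\frac{p}{p-1}1_{(n,p)=1}$. First I would expand the product: for $a\in(\Z/p\Z)^d$,
$$
\prod_{i=1}^t\Lambda_p(\psi_i(a))=\left(\frac{p}{p-1}\right)^t\prod_{i=1}^t 1_{p\nmid\psi_i(a)},
$$
so that
$$
\beta_p=\left(\frac{p}{p-1}\right)^t\,\E_{a\in(\Z/p\Z)^d}\prod_{i=1}^t 1_{p\nmid\psi_i(a)}
=\left(\frac{p}{p-1}\right)^t\,\Pr_{a}\!\big(p\nmid\psi_i(a)\text{ for all }i\big).
$$
By inclusion-exclusion this probability equals $\sum_{S\subseteq[t]}(-1)^{|S|}\Pr_a(p\mid\psi_i(a)\ \forall i\in S)$.

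The key input is the following: since $p$ is not exceptional, no two forms $\psi_i,\psi_j$ are affinely related mod $p$, and in particular (by the parenthetical remark in the definition) no single form $\psi_i$ is a nonzero constant mod $p$; moreover admissibility or just finite complexity isn't even needed here. So for a single form $\psi_i$ that is not identically zero mod $p$, the affine hyperplane $\{a:\psi_i(a)\equiv 0\}$ has exactly $p^{d-1}$ points, giving $\Pr_a(p\mid\psi_i(a))=1/p$; if $\psi_i\equiv 0$ mod $p$ identically this probability is $1$, but then admissibility fails, and in any case this is the content we must track. For a pair $i\neq j$, since $\psi_i$ and $\psi_j$ are \emph{not} affinely related mod $p$, their linear parts are linearly independent over $\F_p$ after accounting for the constants — more precisely the map $a\mapsto(\psi_i(a),\psi_j(a))$ mod $p$ either has image of size $p^2$ (when the linear parts $\dot\psi_i,\dot\psi_j$ are independent mod $p$) giving $\Pr=1/p^2$, or the two forms differ by something forcing a relation; the point is that non-exceptionality rules out the degenerate cases in which this joint probability is as large as $1/p$. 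Thus $\Pr_a(p\mid\psi_i(a)\ \forall i\in S)=1/p^{|S|}+O(1/p^{|S|+1})$ — or is bounded by $O(1/p^2)$ as soon as $|S|\geq 2$ — uniformly for $|S|\le t$, with implied constant depending only on $d$ and $t$.

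Plugging this into inclusion-exclusion,
$$
\Pr_a\!\big(p\nmid\psi_i(a)\ \forall i\big)=\sum_{S\subseteq[t]}(-1)^{|S|}\Big(\tfrac{1}{p^{|S|}}+O_{d,t}(p^{-|S|-1})\Big)
=\Big(1-\tfrac1p\Big)^t+O_{d,t}(p^{-2}),
$$
where the error terms from $|S|=0,1$ vanish exactly and those from $|S|\ge 2$ are absorbed. Multiplying by $(p/(p-1))^t=(1-1/p)^{-t}$ yields
$$
\beta_p=1+O_{d,t}(p^{-2}),
$$
which is the first assertion. For the second assertion, if $\Psi$ is admissible then only finitely many primes are exceptional (an exceptional prime must divide one of finitely many nonzero integers built from the coefficients of pairs of forms), and at each such prime $\beta_p\neq 0$ by admissibility while $\beta_p$ is clearly finite; at all the remaining primes $\beta_p=1+O_{d,t}(p^{-2})$, so $\sum_p|\beta_p-1|<\infty$ and the infinite product $\prod_p\beta_p$ converges to a nonzero limit.

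The main obstacle is the careful bookkeeping in the pair case: one must verify that the precise combinatorial definition of "affinely related mod $p$" is exactly what guarantees $\Pr_a(p\mid\psi_i(a),\ p\mid\psi_j(a))\le c_{d,t}/p^2$ rather than merely $O(1/p)$, handling separately the sub-cases where the linear parts $\dot\psi_i,\dot\psi_j\bmod p$ are proportional (in which non-exceptionality forces the constants to be incompatible, making the joint probability $0$) and where they are independent (joint probability exactly $1/p^2$). Once that dichotomy is pinned down, the rest is the routine inclusion–exclusion and convergence argument sketched above.
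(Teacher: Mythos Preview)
Your proposal is correct and follows essentially the same route as the paper: expand the product, apply inclusion--exclusion, and use that for non-exceptional $p$ any pair of forms vanishes simultaneously with probability exactly $p^{-2}$, so all $|S|\ge 2$ terms are $O_{d,t}(p^{-2})$; the convergence argument (finitely many exceptional primes, each dividing a nonzero $2\times 2$ minor) is also the same. One small remark: your sub-case ``linear parts proportional but constants incompatible'' is actually vacuous here, since proportional linear parts already make $\psi_i=c\psi_j+b$ for some $b$, hence affinely related mod $p$ and $p$ exceptional --- so for non-exceptional $p$ the joint probability is always exactly $1/p^2$, not $0$.
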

\begin{proof}
If two forms $\psi_i$ and $\psi_j$ are not affinely related modulo $p$, then the probability
as $a$ ranges over $(\Z/p\Z)^d$ that they vanish simultaneously at $a$ is $p^{-2}$,
by elementary linear algebra (see for instance \cite[Proposition C.5]{bienv}). Inclusion-exclusion then yields $\beta_p=1+O(p^{-2})$ for $p$ unexceptional.

Now, if $\Psi$ is admissible, only finitely many primes are exceptional. Indeed, if $\psi_i$ and $\psi_j$ are affinely related modulo $p$ then
all the $2\times 2$ minors of the matrix $(\dot{\psi}_k(e_\ell))_{k\in\{i,j\},\ell\in[d]}$ are divisible by $p$ although 
at least one these minors has to be nonzero because
they are not affinely related as forms over $\Z$. Moreover,
the hypothesis of admissibility implies that $\beta_p\neq 0$ for every prime $p$, so that the product is convergent and nonzero.
\end{proof}

We now state our main theorem.
\begin{trm}
\label{mytrm}
Let $d,t$ be positive integers  and $A,B,L$ be positive constants.
Assume $\Psi=(\psi_1,\ldots,\psi_{t}):\Z^d\rightarrow \Z^t$ is an admissible system. 
Suppose that $\nor{\Psi}_{N,B}\leq L$ and that $K\subset [-N,N]^d$ is a convex body satisfying $\Vol (K)\gg N^d\log^{-A}N$ and $\Psi(K)\subset\R_+^t$. 
Then
\begin{equation}
\label{myequation}
\sum_{n\in\Z^d\cap K}\prod_{i=1}^{t}\Lambda(\psi_i(n))=
\Vol(K)\prod_p\beta_p(1+o_{d,t,A,B,L}(1)).
\end{equation}
\end{trm}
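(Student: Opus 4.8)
The plan is to run the Green--Tao--Ziegler programme for $\Psi$ while keeping explicit control of every parameter, so that a modulus, a coefficient, or a number of exceptional primes of magnitude $(\log N)^{O(1)}$ can be absorbed into the savings. The case $t=1$ is the classical Siegel--Walfisz theorem, applied to the one-dimensional sums obtained by fixing all but one of the variables, so we may assume $t\ge 2$ and, by admissibility, that $\Psi$ has finite complexity. The minor computation in the proof of Lemma~\ref{lm:convBetap} shows that every exceptional prime is at most $(\log N)^{O_{d,t,B,L}(1)}$ and that there are $O_{d,t,B,L}(\log\log N)$ of them; thus $Q_\Psi:=\prod_{p\in P_\Psi}p$ satisfies $\log Q_\Psi=(\log\log N)^{O(1)}$.

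\textbf{The $W$-trick.} Choose a slowly growing $w=w(N)\to\infty$ and put $W:=Q_\Psi\cdot\prod_{p\le w}p\cdot\prod_{i\in[t]}g_i$, where $g_i$ is the content of $\dot\psi_i$; then $\log W=(\log\log N)^{O(1)}=o(\log N)$, so $W=N^{o(1)}$ lies far below $N$. Split $n\in K$ according to $b:=n\bmod W$. Since $\psi_i(Wn'+b)\equiv\psi_i(b)\pmod W$, the factor $\Lambda(\psi_i(Wn'+b))$ vanishes, up to a negligible contribution from prime powers, unless $(\psi_i(b),W)=1$, in which case $\Lambda^{(i)}_{W,b}(n'):=\tfrac{\varphi(W)}{W}\Lambda(\psi_i(Wn'+b))$ is, as a function of $n'$, a normalised von Mangoldt weight of mean $1+o(1)$ over $K_b:=\tfrac1W(K-b)$ by Siegel--Walfisz. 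A Chinese Remainder Theorem count of the admissible residues $b$ then reduces \eqref{myequation} to showing that, for each such $b$,
\[
\E_{n'\in\Z^d\cap K_b}\ \prod_{i=1}^t\Lambda^{(i)}_{W,b}(n')=1+o(1).
\]
Granting this, the sum equals $\Vol(K)\prod_{p\mid W}\beta_p(1+o(1))=\Vol(K)\prod_p\beta_p\big(\prod_{p\nmid W}\beta_p\big)^{-1}(1+o(1))$, and since every exceptional prime divides $W$, Lemma~\ref{lm:convBetap} gives $\prod_{p\nmid W}\beta_p=1+O\big(\sum_{p>w}p^{-2}\big)=1+o(1)$. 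The volume hypothesis guarantees $\abs{\Z^d\cap K_b}\sim\Vol(K)/W^d\gg N^{d-o(1)}$, leaving room to spare in the error.

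\textbf{Decomposition and the generalised von Neumann step.} Write $\Lambda^{(i)}_{W,b}=1+\Lambda^{\flat}_{i,b}$, where $\Lambda^{\flat}_{i,b}$ is pointwise dominated by $\nu_W+1$ for a pseudorandom majorant $\nu_W$ and satisfies $\|\Lambda^{\flat}_{i,b}\|_{U^{s+1}[M]}=o(1)$, with $s=s(d,t)$ the complexity of $\dot\Psi$ and $M\asymp N/W$; the point is that this Gowers-uniformity estimate holds with a saving $(\log N)^{-C}$ for every $C$, uniformly in $b$ and in $W$ throughout its range --- the $U^2$ contribution being Siegel--Walfisz plus a routine minor-arc bound and the higher-order contribution being \cite{MN,GI} --- and here it is crucial that every prime at which $\dot\Psi$ or a weight $\Lambda^{(i)}_{W,b}$ could degenerate, namely every exceptional prime, already divides $W$ and so leaves no trace in the coprime-to-$W$ world. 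Expanding $\prod_i(1+\Lambda^{\flat}_{i,b})$ gives the main term $\E_{n'}1=1$ together with $2^t-1$ error terms, each carrying a factor $\Lambda^{\flat}_{i,b}$. Each error term is controlled by the generalised von Neumann theorem for the linear system $\dot\Psi$: boundedly many (in terms of $s$) Cauchy--Schwarz applications bound it by a fixed power of the coefficient sizes of $\dot\Psi$, that is $(\log N)^{O_{d,t,L}(1)}$, times $\|\Lambda^{\flat}_{i,b}\|_{U^{s+1}[M]}$, times a power of the majorant linear-forms average $\E_{n'}\prod_i\nu_W(\dot\psi_i(n'))$, which is $1+o(1)$ since all its factors over primes not dividing $W$ --- all exceeding $w$ --- are $1+O(p^{-2})$. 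Choosing $C$ larger than all the accumulated exponents of $\log N$ makes each error term $o(1)$, completing the proof.

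\textbf{The main obstacle.} The coefficient growth, through the resulting unboundedness of the number of exceptional primes, forces the $W$-trick modulus above to exceed every fixed power of $\log N$ while remaining $N^{o(1)}$. The heart of the argument is therefore to establish the pseudorandom-majorant estimates of \cite{GT2} and the Gowers-uniformity estimates descending from \cite{MN,GI} \emph{with a saving of the form $(\log N)^{-C}$ for every $C$} --- ultimately an $\exp(-c\sqrt{\log N})$-type saving inherited from Siegel--Walfisz through the major-arc analysis --- uniformly for $W$-trick moduli of size $e^{(\log\log N)^{O(1)}}$ and for linear coefficients of size $(\log N)^{O(1)}$, and to track this uniformity through the Cauchy--Schwarz manipulations of the generalised von Neumann theorem. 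That is where the bulk of the technical work lies.
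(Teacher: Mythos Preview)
Your overall architecture---$W$-trick with a modulus absorbing all exceptional primes, pseudorandom majorant, generalised von Neumann, Gowers-norm estimate---matches the paper's, but there is a genuine gap at the final step.

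You assert that $\|\Lambda^{\flat}_{i,b}\|_{U^{s+1}[M]}=O((\log N)^{-C})$ for every $C$, crediting \cite{MN,GI}. This is not what those papers give. The M\"obius--Nilsequences theorem \cite{MN} indeed yields arbitrary power-of-log savings for the \emph{correlation} of $\Lambda'_{W,b}-1$ with any fixed nilsequence. But to pass from a Gowers-norm lower bound to such a correlation one must invoke the inverse theorem \cite{GI}, whose dependence on the uniformity parameter is ineffective (and even the best effective versions available give nothing close to polylogarithmic). So the only bound one can extract is the qualitative $\|\Lambda'_{W,b}-1\|_{U^{s+1}}=o(1)$. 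The paper says this explicitly in the introduction: the form of the error term in \eqref{myequation} is ``ultimately due to the ineffectiveness of the Gowers norm estimate''. Your scheme of accepting a $(\log N)^{O(1)}$ loss from the coefficients and cancelling it with a $(\log N)^{-C}$ Gowers-norm saving therefore does not close.

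The paper's remedy is precisely to avoid any polylogarithmic loss. Its generalised von Neumann theorem (Theorem~\ref{GVNT}), run over the interval $[X]^d$ with the pseudorandom majorant, outputs not the standard Gowers norm but the \emph{twisted} box expression
\[
\E_{y}\E_{n^{(0)},n^{(1)}\in[X]^{s+1}}\prod_{\omega}f_1\Big(\sum_i q_i n_i^{(\omega_i)}+\psi_1(0,y)\Big),
\]
with the original linear coefficients $q_1,\ldots,q_{s+1}$ still present; the majorant keeps the Cauchy--Schwarz losses at $1+o(1)$ throughout. The key step you are missing is then a further decomposition of each variable $n_i$ into residue classes modulo $Q_i=\prod_{j\ne i}q_j$, which equalises all coefficients to a common $Q'$ and converts the twisted expression into a genuine Gowers norm of $\Lambda'_{Q'\widetilde W,a}-1$ on a slightly smaller scale. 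Since $Q'\widetilde W$ is still $(\log N)^{O(1)}$, the standard qualitative $o(1)$ bound from \cite{GT2,MN,GI} applies and suffices. In short: the paper never needs a quantitative Gowers-norm estimate because it never incurs a quantitative loss; the congruence-class equalisation of coefficients is the missing idea in your sketch.
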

Unlike Theorem \ref{GT}, this theorem
is still meaningful when the convex body $K\subset [-N,N]^d$
satisfies $\Vol(K)=o(N^d)$ and applies even when the linear coefficients are of size polylogarithmic in $N$.

In some special cases, Theorem \ref{mytrm} follows easily
from the work of Green and Tao. In fact, Theorem \ref{GT} is a consequence of an asymptotic for the unbounded system $W\Psi+b$ where
$\Psi$ is a bounded system, $W=\prod_{p\leq w}p=\log^{1+o(1)}N$
and $b=(b_1,\ldots,b_t)\in[W]^t$ is a $t$-tuple of integers coprime to $W$.
More generally, an unbounded system $q\Psi+b$ 
with $q=\log^{O(1)}N$ 
and $\Psi$ bounded
is tractable, via an asymptotic for the system $\Wt\Psi+c$
where $\Wt=Wq$.\footnote{This remark was already exploited by the author in a previous paper \cite[Section 2.3]{bienv}.} By decomposing into residue classes, this 
extends to systems $\Psi$ such that
for each $j$, the coefficients $\dot{\psi}_i(e_j)$ are bounded multiples of a common coefficient $q_j$. We show an example, 
corresponding to the count of $k$-term progressions of primes
whose common difference is a multiple of $q$. We have
$$
\sum_{\substack{1\leq n,d\\ n+(k-1)qd\leq N}}\prod_{i=0}^{k-1}\Lambda(n+iqd)=
\sum_{a\in [q]}\sum_{\substack{1\leq n,d\\ n+(k-1)d\leq \frac{N-a}{q}}}
\prod_{i=0}^{k-1}\Lambda(q(n+id)+a)
$$
and are thus left with a system of the form $q\Psi+b$ with $\Psi$
bounded.

We now provide less immediate examples where Theorem \ref{mytrm}
applies. 
\begin{ex}
%
%As an example, this theorem yields the asymptotic for the number of arithmetic progression of length $k$ inside the
%primes smaller than $N$ and congruent to $1\mod q$ whose common difference is a prime congruent to
%$1\mod q'$, where
%$q$ and $q'$ are of size $\asymp\log^{C}N$ for $C>0$. More precisely, assimilating $\Lambda$ to $\log N1_P$ and assuming $\prod_{p\leq k}p\mid q$, we have
%$$
%\sum_{\substack{1\leq n,d\\
%q(n+(k-1)(q'd+1))+1\leq N}}1_\P(q'd+1)\prod_{i\in[k]}1_\P(q(n+(i-1)(q'd'+1))+1)=
%\frac{N^2}{2q^2q'(k-1)\log^{k+1}N}\prod_p\beta_p (1+ o(1))
%$$
%where for $p\mid (q,q')$, we have $\beta_p=(p/(p-1))^{k+1}$,
%$$
%\beta_p=\left\lbrace\begin{array}{cc}
% \left(\frac{p}{p-1}\right)^{k+1}  & \text{ if }  p\mid (q,q')\\ 
%  \left(\frac{p}{p-1}\right)^{k+1}\left(\frac{p^2-p}{p^2}\right) & \text{ if }  p\mid q,p\nmid q' \\ 
% \left(\frac{p}{p-1}\right)^{k+1}\left(\frac{p(p-k)}{p^2}\right) & \text{ if }  p\mid q',p\nmid q   \\ 
%  \left(\frac{p}{p-1}\right)^{k+1}\left(\frac{(p-1)(p-k)}{p^2}\right)&  p\nmid q',p\nmid q\\ 
% 
%\end{array} 
%\right.
%$$
What is the proportion of arithmetic progressions
$n+d\N$ whose $q_1$th,$\ldots$,$q_k$th terms are all primes?
Assume that $q_i=\lfloor \log^i N\rfloor$.
The answer is given by
$$
\sum_{1\leq n,d\leq N}\prod_{i=1}^k\Lambda(n+q_id).
$$
For this system, the factors $\beta_p$ can be easily expressed,
using the notation $h(p)$ for the number of classes modulo
$p$ occupied by $q_1,\ldots,q_k$, as
$$
\beta_p=\left(\frac{p}{p-1}\right)^k\frac{(p-1)(1+p-h(p))}{p^2}.
$$
\end{ex}
\begin{ex}
We can also count $k$-terms arithmetic progressions of primes up to $N$ whose common difference is $q=\lfloor \log N\rfloor$ times a prime.
This time the sum to consider is
$$
\sum_{1\leq n\leq n+(k-1)qd\leq N}\Lambda(d)\prod_{i=0}^{k-1}\Lambda(n+iqd).
$$
To simplify the expression of the local factors, assume
$\prod_{p\leq k}p\mid q$. Then
$$\beta_p=
\left(\frac{p}{p-1}\right)^{k+1}\frac{1}{p^2}
\left\lbrace\begin{array}{cc}
(p-1)^2 & \text{ if } p\mid q\\
(p-1)(p-k) & \text{ if } p\nmid q.
\end{array}\right.
$$
\end{ex}
\begin{ex}
We provide the asymptotic count of solutions to linear equations in the shifted squarefree primes, that is, primes $p$ for
which $p-1$ is squarefree. As it is not a direct application,
we give the details in the final section.
\end{ex}

In view of the Siegel-Walfisz theorem \eqref{PNTAP2}, one may
hope to write
$$\sum_{n\in\Z^d\cap K}\prod_{i=1}^{t}\Lambda(\psi_i(n))=
\Vol(K)\left(\prod_p\beta_p+o_{d,t,A,B,L}(1)\right)$$
instead of \eqref{myequation}, but unfortunately our method does
not yield this. Such an estimate is genuinely stronger given that $\prod_p\beta_p$ might well tend to infinity with $N$ (and possibly the linear coefficients as well). This weaker bound is ultimately due to the ineffectiveness of the Gowers norm estimate
\cite[Theorem 7.2]{GT2}.

To prove Theorem \ref{mytrm}, we first get rid of the convex body by decomposing into reasonably small boxes,
so that the theorem simply needs to be proven on boxes. In this context, the variables all have the same range and
are independent of each other, which makes it possible, after
the introduction of the $W$-trick,  to prove a suitable von Neumann theorem.\footnote{The paper of Green and Tao also proceeds via a destruction of the convex body $K$ in Appendix C, but their method ceases to bear fruit as
soon as $\Vol(K)=o((\diam K)^d)$.} The latter eliminates all but one form, say $\psi_1$, thanks to a pseudorandom majorant. The next step is to equalize all coefficients of the chosen form $\psi_1$, by decomposing the ranges of
averaging into congruence classes. We are then left requiring a Gowers norm estimate which was proven by Green and Tao,
conditionally on two conjectures later fully resolved by
Green, Tao  and Ziegler \cite{MN,GI}.

We assume a certain amount of familiarity with the original arguments of Green and Tao \cite{GT2}. Wherever only minor changes are needed to accommodate the unbounded coefficients, 
we will
simply describe what must be changed in the existing arguments.

\textbf{Asymptotic notation.} The main asymptotic parameter throughout the paper is $N$, and all the standard asymptotic notation $o,O,\ll$ refers to the limit
as $N\rightarrow\infty$. There is an exception: whenever we discuss local factors, the limit is when $p\rightarrow\infty$, see
for instance Lemma \ref{lm:convBetap}. Several statements require, sometimes implicitly, that $N$ be large enough, which we always assume. Many other asymptotic parameters are defined in terms of $N$, such as $M,X,Z,Y$ so that the limit
as $N\rightarrow\infty$ is the same as, say, the limit
as $M\rightarrow\infty$. Indices may be added to symbols such as $O$, in which case they indicate upon which parameters the implied constant depends. 
%Such indices may be omitted.

\section{First reductions}

As in \cite[Theorem 4.1]{GT2}, we show that we can assume 
that the 
affine forms not only satisfy $\psi_i(n)\geq 0$ but actually $\psi_i(n)\geq N^{9/10}$.
We remark that the set $$\{n\in K\cap\Z^d : \exists i\in[t]\quad \psi_i(n)\leq N^{9/10}\}$$ 
contains only $O(N^{d-1/10})=o(\Vol(K)N^{-1/20})$ elements, so that we can replace $K$ by the convex body $K\cap\bigcap_{i=1}^t\psi_i^{-1}([N^{9/10},+\infty))$ in Theorem \ref{mytrm}. 
Moreover, prime powers are so sparse that we can restrict $\Lambda$ to primes, replacing
$\Lambda$ by $\Lambda'=1_\P\log$.
This leads to the first reduction.
\begin{prop}
%\label{reduction}
Let $\Psi=(\psi_1,\ldots,\psi_{t}):\Z^d\rightarrow \Z^t$ be an admissible system. 
Suppose that $\nor{\Psi}_{N,B}\leq L$ and that $K\subset [-N,N]^d$ is a convex body satisfying $\Vol (K)\gg N^d\log^{-A}N$ and $\Psi(K)\subset([N^{9/10},+\infty))^t$. Then
\begin{equation*}
\sum_{n\in\Z^d\cap K}\prod_{i=1}^{t}\Lambda'(\psi_i(n))=
\beta_{\infty}\prod_p\beta_p\left(1+o_{d,t,A,B,L}(1)\right).
\end{equation*}
\end{prop}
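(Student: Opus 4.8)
The plan is to follow the structure sketched at the end of the introduction, making each reduction quantitative enough to survive the weakened hypotheses (unbounded linear coefficients, $\Vol(K) = o(N^d)$). First I would cut the convex body $K$ into boxes. Since $K \subset [-N,N]^d$, one can tile $\R^d$ by axis-parallel boxes of common side-length $M$ with $M = N\log^{-C}N$ for a suitable $C = C(A)$ (large compared with $A$), keep only those boxes entirely contained in $K$, and estimate the contribution of the boundary boxes trivially by $O((N/M)^{d-1}\cdot M^{d-1}\log^{O(1)}N) \cdot$(trivial bound on $\prod \Lambda'$) — this is $o(\Vol(K))$ because $\Vol(K) \gg N^d\log^{-A}N$ and there are few boundary boxes once $M$ is not too small. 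Inside each surviving box $Q = \prod_j [u_j, u_j + M)$ one substitutes $n_j = u_j + m_j$ with $m_j \in [0,M)$, which turns $\Psi$ into a new admissible system $\Psi_Q$ with the \emph{same} linear part (hence the same size bound at scale $(M,B')$, up to adjusting $B$) and shifted constant terms still of size $O_L(N\log^B N) = O_L(M \log^{B+C}N)$; crucially the $m_j$ now range over a common interval $[M]$ and are genuinely independent. One also checks $\beta_\infty = \sum_Q M^d \beta_\infty^{(Q)}(1+o(1))$ where $\beta_\infty^{(Q)}$ is the local density in the box, essentially by Riemann-sum approximation of $\Vol(K \cap \Psi^{-1}(\R_+^t))$.

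The second step is the $W$-trick and the von Neumann theorem on boxes. Set $w = w(N) \to \infty$ slowly (say $w = \log\log N$), $W = \prod_{p \le w} p = \log^{1+o(1)}N$, and expand each $m_j$ over residues $b_j \bmod W$; this replaces $\Lambda'(\psi_i(m))$ by $\Lambda'_{b,W}(\psi_i') := \frac{\varphi(W)}{W}\cdot\frac{W}{\varphi(W)}\cdots$ — more precisely one normalises to $\til\Lambda_{i}(m') = \frac{\varphi(W')}{W'}\Lambda'(\psi_i(W'm' + c_i))$ where $W'$ absorbs $W$ together with the exceptional primes $P_{\Psi_Q}$ (finitely many, by Lemma \ref{lm:convBetap}, and each $O_L(1)$ since the $2\times 2$ minors are bounded by $\nor{\Psi}_{N,B}^2\log^{2B}N$). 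The point of folding in the exceptional primes is that after this the new system has \emph{no} exceptional prime, so Lemma \ref{lm:convBetap} gives $\beta_p = 1 + O(p^{-2})$ for all $p > w$ uniformly. Then one runs the generalised von Neumann theorem exactly as in \cite[Theorem 7.1]{GT2}: using the pseudorandom majorant $\nu = \nu_b$ for $\til\Lambda$ (the Goldston–Yıldırım/Green–Tao majorant, which still majorises because the shifted linear forms $W'm' + c_i$ have bounded coefficients \emph{after} dividing by $W'$ — wait, they do not; the coefficients are $\dot\psi_i(e_j)$, possibly $\log^{O(1)}N$), so in fact one needs the version of the majorant and the linear-forms/correlation conditions that tolerates coefficients of polylogarithmic size, which is available since the relevant sieve estimates lose only $\log^{O(1)}$ factors — this eliminates all forms but $\psi_1$, at the cost of a Gowers norm of $\til\Lambda - 1$.

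The third step equalises the coefficients of the surviving form $\psi_1$: writing $\psi_1(m) = \sum_j a_j m_j + a_0$ with $a_j = O_L(\log^{B}N)$, decompose each $m_j \in [M]$ into arithmetic progressions modulo $a_j$ (or modulo $\lcm_j a_j$, still $\log^{O(1)}N$), so that on each progression $\psi_1$ becomes $a \cdot n + (\text{const})$ for a single coefficient $a$; on each such progression the sum is $\sum_{n} \til\Lambda(an + b)$ over an interval, which by the classical Siegel–Walfisz theorem equals the main term with error $o(1)$ — here is where the polylogarithmic uniformity in the modulus is used, and it is exactly \eqref{PNTAP2}. Reassembling the progressions recovers $\beta_p$ for $p \le w$ and for the exceptional primes. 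Finally, one invokes the Gowers uniformity of $\Lambda$ — \cite[Theorem 7.2]{GT2}, now unconditional by \cite{MN,GI} — to show $\nor{\til\Lambda - 1}_{U^{s+1}[M]} = o(1)$, so the von Neumann error term vanishes; multiplying out the contributions of all boxes and all residues, the Euler product $\prod_p \beta_p$ assembles ($p\le w$ from the $W$-trick and Siegel–Walfisz, $p > w$ from the tail bound $\prod_{p>w}(1+O(p^{-2})) = 1 + o(1)$), and $\beta_\infty$ from the box decomposition. The main obstacle I expect is bookkeeping the uniformity: one must check at every stage (majorant conditions, Siegel–Walfisz error, Riemann-sum error, boundary boxes) that the losses are at worst $\log^{O(1)}N$ and are absorbed by the slack $\Vol(K) \gg N^d\log^{-A}N$ for a suitable choice of $M$, $w$ — equivalently, that $A,B,L$ enter the final $o(1)$ only through the implied constants, exactly as in the statement.
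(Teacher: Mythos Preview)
Your plan follows the same architecture as the paper --- box decomposition, a $W$-trick with the exceptional primes folded into $\Wt=WQ$, a generalised von Neumann theorem with a pseudorandom majorant, coefficient equalisation on the surviving form, and finally the Gowers norm estimate from \cite{GT2}. Two points need correction.

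You omit the reduction to $s$-normal form. This is not cosmetic: the Cauchy--Schwarz cascade in the von Neumann step needs each $\psi_i$ to own a set of $s+1$ coordinates that no other form uses fully, and one must check (via Cramer's rule, as the paper does) that passing to the normal-form extension keeps all coefficients polylogarithmic and the auxiliary convex body large.

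More seriously, Step 3 misidentifies the role of Siegel--Walfisz. After the von Neumann step you are left not with a single sum $\sum_n\til\Lambda(an+b)$ but with a Gowers-type product average
\[
\E_{x^{(0)},x^{(1)}\in[X]^{s+1}}\prod_{\omega\in\{0,1\}^{s+1}} f_1\Big(\sum_i q_i x_i^{(\omega_i)}+c\Big),
\]
and the one-dimensional Siegel--Walfisz theorem cannot evaluate this. The coefficient-equalisation step (the paper writes $x_i^{(\omega_i)} = Q_i n_i^{(\omega_i)} + m_i^{(\omega_i)}$ with $Q_i=\prod_{j\ne i}q_j$) converts the above into a product average with \emph{bounded} linear coefficients, to which one applies Green--Tao's original von Neumann theorem \cite[Proposition 7.1]{GT2} and then the Gowers norm bound $\nor{\Lambda'_{Q'\Wt,a}-1}_{U^{s+1}}=o(1)$. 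The classical Siegel--Walfisz theorem is invoked in the paper only for the degenerate subsystems with $t_0=1$. You do cite the Gowers bound at the end, so all the right ingredients are listed, but the sentence ``on each such progression the sum is $\sum_n\til\Lambda(an+b)$ \ldots\ by Siegel--Walfisz'' describes a step that does not exist in the argument and would fail if attempted.
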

We perform one more elementary reduction, namely we reduce to normal form; for the definition of this notion, we refer to \cite[Definition 4.2]{GT2}. 
The existence of a normal form extension $\Psi'$ for a system $\Psi$ follows from \cite[Lemma 4.4]{GT2}.
We inspect its proof to check that $\nor{\Psi}_{N,B}\leq L$
implies $\nor{\Psi'}_{N,B}\leq L'$ for some constant $L'$ depending only on $L,B,d,t$.
With respect to the paper \cite{GT2}, the change is
that the vector $f_{d+1},\ldots,f_{d'}$ introduced there will now be of size $O_L(\log^C N)$ for some constant $C=O_{B,d,t}(1)$. This is because these vectors are obtained by Cramer's formula, i.e. by computing determinants of matrices of bounded dimensions whose coefficients are $O_L(\log^B N)$. We claim this procedure reduces Theorem \ref{mytrm} to the following proposition, corresponding to \cite[Theorem 4.5]{GT2}.
\begin{prop}
\label{reduction}
Let $d,t$ be positive integers and $A,B,L$ be positive constants.
Let $\Psi=(\psi_1,\ldots,\psi_{t}):\Z^d\rightarrow \Z^t$ be an admissible system in normal form. 
Suppose that $\nor{\Psi}_{N,B}\leq L$ and that $K\subset [-N,N]^d$ is a convex body satisfying $\Vol (K)\gg N^d\log^{-A}N$ and $\Psi(K)\subset([N^{9/10},+\infty))^t$. Then
\begin{equation*}
\sum_{n\in\Z^d\cap K}\prod_{i=1}^{t}\Lambda(\psi_i(n))=
\beta_{\infty}\prod_p\beta_p\left(1+o_{d,t,A,B,L}(1)\right).
\end{equation*}
\end{prop}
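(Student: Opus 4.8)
The plan is to follow the architecture sketched in the introduction, adapting the Green--Tao scheme \cite{GT2} to the unbounded setting. \textbf{Step 1 (Destroying the convex body).} First I would partition $K$ into boxes $Q=\prod_{j=1}^d I_j$ of common sidelength $M$, where $M$ is chosen to grow slightly slower than $N$, say $M=N/\log^{A'}N$ for a suitable $A'$ depending on $A,B$. The number of boxes meeting the boundary $\partial K$ is $O((N/M)^{d-1}\cdot (\text{something}))$, and since $\Vol(K)\gg N^d\log^{-A}N$ this boundary contribution is negligible provided $M$ is not taken too small; this is where the lower bound on $\Vol(K)$ is consumed. On each interior box, after translating, the $d$ variables range over a common interval $[M]$ and are mutually independent, so it suffices to prove an asymptotic of the form $\sum_{n\in[M]^d}\prod_i\Lambda(\psi_i(n))=\beta_\infty(Q)\prod_p\beta_p(1+o(1))$ with $\beta_\infty(Q)=\Vol(Q\cap\Psi^{-1}(\R_+^t))$, uniformly over the relevant affine shifts. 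Summing over the boxes reconstitutes $\beta_\infty=\Vol(K\cap\Psi^{-1}(\R_+^t))$ up to the acceptable error.

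\textbf{Step 2 ($W$-trick and von Neumann).} On a box I would apply the $W$-trick with $w=w(N)\to\infty$ slowly, $W=\prod_{p\le w}p$, writing each $\Lambda(\psi_i(n))$ on an appropriate progression as $\frac{W}{\varphi(W)}\Lambda'(W(\cdots)+b_i)$ and replacing $\Lambda'$ by the Green--Tao pseudorandom majorant $\nu$ supported on $W$-almost-primes. The crucial point is a von Neumann theorem stating that if one of the factors is replaced by $\Lambda'-1$ (the balanced function), the sum is $o(1)$ times $\prod_p\beta_p$ plus a controlled error, controlled by a Gowers norm $\|\Lambda'-1\|_{U^{s+1}}$ of the balanced von Mangoldt-type function on a box of sidelength $\sim M/W$. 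Because on the box the variables are independent and equidistributed, the Cauchy--Schwarz / generalized von Neumann argument of \cite[\S5--6]{GT2} goes through with the linear coefficients now allowed to be $O(\log^B N)$: the only changes are that the number of residue classes and the sizes of various auxiliary parameters acquire polylogarithmic factors, which are swallowed by the $o(1)$ coming from the Gowers norm estimate.

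\textbf{Step 3 (Equalizing coefficients and the Gowers norm input).} After von Neumann we are left, for the single surviving form $\psi_1$, with controlling $\sum_{n}\Lambda'(\psi_1(n))$; here I would split the ranges of the remaining free variables into residue classes modulo the (polylogarithmically bounded) coefficients $\dot\psi_1(e_j)$ so that $\psi_1$ becomes, on each class, an affine form in a \emph{single} integer variable with coefficient $1$ times a bounded-by-$\log^{O(1)}N$ modulus, i.e.\ exactly the situation governed by the classical Siegel--Walfisz theorem \eqref{PNTAP2}. Combining this main term with the local factor computation (Lemma \ref{lm:convBetap}, which guarantees $\prod_p\beta_p$ converges and is nonzero under admissibility) and with the Green--Tao/Green--Tao--Ziegler Gowers uniformity estimate \cite[Theorem 7.2]{GT2}, \cite{MN,GI} for $\Lambda'-1$ (valid for the $W$-tricked function, and insensitive to the polylogarithmic blow-up in coefficients), yields Proposition \ref{reduction}.

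\textbf{Main obstacle.} The delicate point is the bookkeeping in Step 1--2: one must choose $w$, $W$ and the box sidelength $M$ so that simultaneously (i) $W$ and all linear coefficients are small enough (say $\le \log^{O(1)}N$) that the $W$-tricked forms still have coefficients bounded by a power of $\log N$ as required by the Gowers norm estimate, (ii) $M/W$ is large enough that the pseudorandom majorant $\nu$ on the box is genuinely pseudorandom with the needed linear-forms and correlation conditions, and (iii) the boundary loss from cutting $K$ into boxes stays below $\Vol(K)\cdot o(1)$, which forces $M$ close to $N$ and hence constrains how slowly $w$ may grow. Threading all three constraints, and checking that the normal-form reduction (which inflates coefficients by a further $\log^{O(1)}N$ via Cramer's rule, as noted before Proposition \ref{reduction}) does not break them, is the technical heart of the argument; none of it is conceptually new, but it is where the hypotheses $\nor{\Psi}_{N,B}\le L$ and $\Vol(K)\gg N^d\log^{-A}N$ are used in an essential and interlocking way. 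Finally, the reason the conclusion is $\Vol(K)\prod_p\beta_p(1+o(1))$ rather than $\Vol(K)(\prod_p\beta_p+o(1))$ is precisely that the Gowers norm estimate \cite[Theorem 7.2]{GT2} is ineffective, so the $o(1)$ cannot be separated from the (possibly growing) factor $\prod_p\beta_p$; I would flag this explicitly rather than attempt to circumvent it.
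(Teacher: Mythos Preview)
Your overall architecture matches the paper's: decompose $K$ into boxes of sidelength $M\sim N/\log^{A'}N$, apply a $W$-trick on each box, run a generalised von Neumann theorem to isolate one form, equalise its coefficients, and invoke the Gowers norm estimate. However, Step~2 contains a genuine gap that is not mere bookkeeping. You propose the standard $W$-trick with $W=\prod_{p\le w}p$. The paper explains (Section~4) why this is insufficient when linear coefficients are unbounded: if, say, $\psi_i(n)=qn+b$ with $q\sim\log N$, then imposing $(qa+b,W)=1$ does not control prime factors of $q$ exceeding $w$, and more generally the \emph{exceptional primes} of $\Psi$ (those $p$ for which two forms become affinely related mod~$p$) may exceed $w$. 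Both phenomena obstruct the linear-forms condition for the majorant and the identity $\prod_{p\nmid W}\beta_p=1+o(1)$. The paper's fix is to replace $W$ by $\tilde W=WQ$, where $Q$ is the product of all nonzero $1\times1$ and $2\times2$ minors of $(\dot\psi_i(e_j))_{i,j}$; since $Q=\log^{O(1)}N$ one still has $\tilde W=\log^{O(1)}N$, but now every exceptional prime divides $\tilde W$ and the computation closes. Your assertion that ``the only changes are that \ldots\ auxiliary parameters acquire polylogarithmic factors'' misses this structural modification.

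Two further points. First, the Green--Tao von Neumann theorem \cite[\S7]{GT2} passes to a cyclic group $\Z/N'\Z$, which is blocked here because coefficients of size $\log^B N$ cause wraparound; the paper therefore proves a variant (Theorem~\ref{GVNT}) working directly over $[X]^d$, using that in normal form $\psi_1$ omits at least one variable so the residual $\nu$-factors can be Cauchy--Schwarzed away. Second, your Step~3 is garbled: after von Neumann one is left with a Gowers-cube expression in $\psi_1$ (equation~\eqref{eq:pseudoGowers}), not a single sum $\sum_n\Lambda'(\psi_1(n))$; it is this cube that is reduced, via the coefficient equalisation you describe, to the Gowers norm bound~\eqref{eq:claim}, with classical Siegel--Walfisz handling only the degenerate case $t_0=1$.
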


We now check that the arguments following \cite[Theorem 4.5]{GT2} 
yield the deduction of Theorem \ref{mytrm}
from Theorem \ref{reduction}. Let $\Psi$ be an admissible system and $K$ a convex body satisfying the hypotheses of Theorem \ref{mytrm}.
Let $f_{d+1},\ldots,f_{d'}$ be the vectors and $\Psi'$ the system in normal form produced by the procedure given above. 
Suppose all $f_i$ satisfy $f_i=O(\log^C N)$.
Letting $M=N\log^{-C}N$, we define an auxiliary convex body $K'$  by
$$K'=\{(n,m_{d+1},\ldots,m_{d'})\in \R^d\times [-M,M]^{d'-d}\mid 
n+\sum_{i=d+1}^{d'}m_if_i\in K\}$$
which is included in $[-N',N']^{d'}$ where $N'=O(N)$. Moreover,
we still have $$\Vol(K')=\Vol(K)(2M)^{d'-d}\gg N'^d\log^{-D}N'$$
for some constant $D$.
Finally, the local factors are left unchanged by this operation. Thus if the system $\Psi$ is admissible, so is $\Psi'$, so that
Proposition \ref{reduction} can be applied to it, which concludes
the proof of the reduction.

\section{Reduction to the case of a box}
We show that the main theorem follows from the following very particular case.
\begin{prop}
\label{Boxes}
Let $d,t$ be positive integers and $A,B,L$ be positive constants.
Let $\Psi=(\psi_1,\ldots,\psi_{t}):\Z^d\rightarrow \Z^t$ be an admissible system in normal form. 
Suppose that $\nor{\Psi}_{M,B}\leq L$ and that $\Psi([M]^d)\subset([M^{9/10},+\infty))^t$.
Then
\begin{equation*}
\sum_{n\in[M]^d}\prod_{i=1}^{t}\Lambda(\psi_i(n))=
M^d\prod_p\beta_p\left(1+o_{d,t,A,B,L}(1)\right).
\end{equation*}
\end{prop}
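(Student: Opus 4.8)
The plan is to follow Green and Tao \cite{GT2} closely, performing a $W$-trick directly on the box $[M]^d$ and checking that each step tolerates linear coefficients and a modulus of size $\log^{O(1)}M$.

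\emph{Step 1: a $W$-trick on the box.} Pick $w=w(M)\to\infty$ slowly, put $W=\prod_{p\le w}p$, and let $\til W=\lcm(W,\prod_{p\in P_\Psi}p)$. As in the proof of Lemma \ref{lm:convBetap}, every exceptional prime divides a nonzero $2\times2$ minor of a coefficient matrix, so the exceptional primes are all $\le\log^{O(1)}M$ and there are only $O(\log\log M)$ of them; hence $\til W=\log^{O(1)}M$. Split $[M]^d$ into residues $n=\til W n'+u$, $u\in[\til W]^d$, $n'\in[M']^d$ with $M'=\lfloor M/\til W\rfloor\to\infty$. If some $\psi_i(u)$ is not coprime to $\til W$ then $\psi_i(\til W n'+u)$ is divisible by a fixed prime $p\mid\til W$ for every $n'$, so by the hypothesis $\Psi([M]^d)\subset([M^{9/10},+\infty))^t$ the factor $\Lambda(\psi_i(\til W n'+u))$ vanishes for $M$ large, and such ``bad'' residues contribute nothing. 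For a ``good'' residue, writing $\Lambda'_{b,\til W}(m)=\frac{\varphi(\til W)}{\til W}\Lambda(\til W m+b)$, the contribution of the class is $\bigl(\frac{\til W}{\varphi(\til W)}\bigr)^{t}(M')^{d}\,\E_{n'\in[M']^d}\prod_{i}\Lambda'_{b_i,\til W}(\til\psi_i(n'))$, where $\til\Psi$ is the system with the same polylogarithmic linear parts $\dot\psi_i$, constant terms of size $O_L(M\log^B M)$, and $b_i\equiv\psi_i(u)\bmod\til W$. Summing over good residues, a short computation reproduces the factor $\prod_{p\mid\til W}\beta_p$ once $(\til W/\varphi(\til W))^t$ is accounted for, while every $p\nmid\til W$ exceeds $w$ and is non-exceptional, so $\beta_p=1+O(p^{-2})$ and $\prod_{p\nmid\til W}\beta_p=1+o(1)$.

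\emph{Step 2: pseudorandom majorant and a von Neumann theorem.} Build the Green--Tao truncated-divisor-sum majorant $\nu$ dominating each $\Lambda'_{b_i,\til W}$, and verify its linear forms and correlation conditions for $\til\Psi$ --- this is the only place where the polylogarithmic size of the coefficients and of the modulus $\til W$ must be carried through the local sums. Granting this, the generalized von Neumann theorem for systems in normal form peels off the forms $\psi_2,\dots,\psi_t$: writing $\Lambda'_{b_i,\til W}=1+g_i$ for $i\ge2$ and expanding the product, each resulting term carrying at least one factor $g_i$ is bounded, by the theorem applied with distinguished index in that factor, by $\|g_i\|_{U^{s}[M']}+o(1)$, where $s=s(d,t)$ is the Cauchy--Schwarz complexity; hence $\E_{n'}\prod_i\Lambda'_{b_i,\til W}(\til\psi_i(n'))=\E_{n'}\Lambda'_{b_1,\til W}(\til\psi_1(n'))+O\bigl(\max_{i\ge2}\|g_i\|_{U^{s}[M']}\bigr)+o(1)$.

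\emph{Step 3: the surviving form, and conclusion.} Equalizing the linear coefficients of $\til\psi_1$ by a further congruence decomposition of the variable ranges modulo the (again polylogarithmic) lcm of those coefficients turns the Gowers norms above into standard norms $\|\Lambda'_{b,\til W}-1\|_{U^{s}}$ on intervals, which are $o(1)$ uniformly in $b$ coprime to $\til W$ by the Siegel--Walfisz-type Gowers estimate \cite[Theorem 7.2]{GT2}, whose hypotheses are the conjectures resolved in \cite{MN,GI}. The surviving term $\E_{n'}\Lambda'_{b_1,\til W}(\til\psi_1(n'))=\frac{\varphi(\til W)}{\til W}\E_{n'}\Lambda(\psi_1(\til W n'+u))$ is a $d$-fold average of a single linear form, which the classical Siegel--Walfisz theorem (modulus $\log^{O(1)}M$) evaluates, after summing over the variables, to $1+o(1)$ --- here it is essential that no prime dividing all linear coefficients of $\psi_1$ lies outside $\til W$, since such a prime would be exceptional and has been swept into $\til W$, otherwise a spurious local factor would survive. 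Assembling Steps 1--3 (all the $o(1)$'s being uniform over the residues $b$), the left-hand side equals $M^{d}\prod_{p\mid\til W}\beta_p\,(1+o(1))$, which, by $\prod_{p\nmid\til W}\beta_p=1+o(1)$, equals $M^{d}\prod_p\beta_p\,(1+o(1))$.

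\emph{Main obstacle.} The real work is Step 2: carrying Green and Tao's enveloping-sieve majorant and their generalized von Neumann theorem --- the construction of $\nu$, the verification of its linear forms and correlation conditions, and the Cauchy--Schwarz manipulations in the proof --- through the regime where the system's linear coefficients and modulus have size $\log^{O(1)}M$ rather than $O(1)$, with all error terms kept $o(1)$. The available slack is that $\til W$, and hence the number of extra variables in the normal-form reduction and the number of congruence classes used to equalize coefficients, remains only polylogarithmic, so $M'=M\log^{-O(1)}M\to\infty$ and the divisor and local bounds stay manageable; what one must not expect, by contrast, is an effective error term, which is why the conclusion is $\prod_p\beta_p(1+o(1))$ rather than $\prod_p\beta_p+o(1)$.
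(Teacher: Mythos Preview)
Your strategy—$W$-trick on the box, pseudorandom majorant with a linear forms condition, a generalised von Neumann theorem, then coefficient equalisation and appeal to the Gowers-norm estimate \cite[Theorem 7.2]{GT2}—is precisely the route the paper takes in Sections 4--6, and your identification of Step 2 as the main obstacle matches the paper's emphasis.

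There is one technical slip worth flagging. Your $\til W=\lcm(W,\prod_{p\in P_\Psi}p)$ absorbs only the \emph{exceptional} primes, whereas the paper takes $\til W=WQ$ with $Q$ the product of all nonzero $1\times1$ and $2\times2$ minors of the coefficient matrix—so in particular every nonzero linear coefficient $\dot\psi_i(e_j)$ divides $\til W$. This matters in Step 3: after equalising the coefficients of $\psi_1$ via $x_i=Q_in_i+m_i$ with $Q'=\prod_j q_j$, the function whose Gowers norm must be bounded is not $\Lambda'_{b,\til W}-1$ but $\Lambda'_{a,\til WQ'}-1$ for a new residue $a=\til W c+b$, and one needs $(a,\til WQ')=1$ to invoke \cite[Theorem 7.2]{GT2}. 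With the paper's $\til W$, every prime factor of $Q'$ already divides $\til W$, so $(a,\til W)=1$ suffices; with your smaller $\til W$, a prime $p\mid q_j$ with $p\nmid\til W$ may divide $a$ for a positive proportion of the residues $m$, and on those residues the Gowers norm is that of the constant $-1$, not $o(1)$. Your remark that ``no prime dividing \emph{all} linear coefficients of $\psi_1$ lies outside $\til W$'' is correct but not sufficient: primes dividing \emph{some} $q_j$ also obstruct coprimality after equalisation. The fix is either to enlarge $\til W$ as the paper does, or to observe that such bad residues have density $O((\log\log M)/w)=o(1)$ and contribute $O(1)$ each via the linear forms condition.
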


\begin{proof}[Proof that Proposition \ref{Boxes} implies Proposition \ref{reduction}.]
Let $K\subset [-N,N]^d$ be a convex body satisfying $\Vol (K)\gg N^d\log^{-A}N$. Let $$K'=\{x\in K\mid d(x,\partial K)\geq N\log^{-A-1}N\}$$ and $$K''=\{x\in \R^d\mid d(x,K)\leq N\log^{-A-1}N\}.$$
These are two convex bodies.
The arguments from elementary convex geometry displayed in \cite[Appendix A]{GT2} allow one to infer that
$$
\Vol(K')=\Vol(K)+O(N^d\log^{-A-1}N)=\Vol(K)(1+o(1))
$$ 
and the same for $K''$.
Now let $M=N\log^{-A-1}N/\sqrt{d}$ and consider the grid $(M\Z)^d$. Let $\mathcal{B}=\{c+[M]^d\mid c\in J\}$ be the collection of boxes defined by this grid that are included in $K$
and $\mathcal{B'}=\{c+[M]^d\mid c\in J'\}$ be the collection of boxes defined by this grid that meet $K$. Note that \begin{equation}
\label{encadrement}
K'\subset \bigcup_{B\in \mathcal{B}}B\subset K\subset\bigcup_{B\in \mathcal{B'}}B\subset K'' .
\end{equation}
The first inclusion is because if a box $B$ from the grid meets $K'$, then it is included in $K$.

Now let $\Psi=(\psi_1,\ldots,\psi_{t}):\Z^d\rightarrow \Z^t$ be a system of affine-linear forms of finite complexity. 
Suppose that $\nor{\Psi}_{M,B}\leq L$ and that $\Psi([M]^d)\subset([M^{9/10},+\infty))^t$.
Then
$$
\sum_{B\in\mathcal{B}}\sum_{n\in B\cap\Z^d}\prod_{i\in[t]}\Lambda(\psi_i(n))\leq \sum_{n\in K\cap \Z^d}\prod_{i\in[t]}\Lambda(\psi_i(n))\leq \sum_{B\in\mathcal{B'}}\sum_{n\in B\cap\Z^d}\prod_{i\in[t]}\Lambda(\psi_i(n))
$$

Now if $B=c+[M]^d$ with $c\in\Z^d$, letting $\Psi_c=\Psi+\dot{\Psi}(c)$, we can write
$$
\sum_{n\in B\cap\Z^d}\prod_{i\in[t]}\Lambda(\psi_i(n))=
\sum_{n\in[M]^d}\prod_{i\in[t]}\Lambda(\psi_{c,i}(n)).
$$
We check that the system $\Psi_c$ satisfies $\nor{\Psi_c}_{M,C}=O(1)$ for some constant $C$. Indeed, the linear coefficients are unchanged, so still of size $O(\log^B N)=O(\log^B M)$, and the constant coefficients are of size
$O(N\log^B N)=O(M\log^{A+B+1}M)$. Thus $C=A+B+1$ is good enough.
Moreover  $\Psi_c([M]^d)\subset([N^{9/10},+\infty))^t$.
Thus we can apply Proposition \ref{Boxes}. We note that the $\beta_p$ it produces for a box $B=c+[M]^d$ is in fact independent of $c$, because the translation invariance of $\Z/p\Z$ allows one to write
$$
\E_{a\in(\Z/p\Z)^d}\prod_{i\in[t]}\Lambda_p(\psi_i(a)+\dot{\psi_i}(c))=\E_{a\in(\Z/p\Z)^d}\prod_{i\in[t]}\Lambda_p(\psi_i(a+c))=
\E_{a\in(\Z/p\Z)^d}\prod_{i\in[t]}\Lambda_p(\psi_i(a)).
$$
Finally,
$$
\abs{\mathcal{B}}M^d\prod_p\beta_p(1+o(1))\leq \sum_{n\in K\cap \Z^d}\prod_{i\in[t]}\Lambda(\psi_i(n))\leq\abs{\mathcal{B'}}M^d
\prod_p\beta_p(1+o(1))
$$
Because of the inclusions \eqref{encadrement}, we see that
$$
\Vol(K)(1+o(1))=\Vol(K')\leq \abs{\mathcal{B}}M^d\leq \abs{\mathcal{B'}}M^d\leq \Vol(K'')=\Vol(K)(1+o(1)).
$$
This completes the proof of Proposition \ref{reduction}.

%Careful, I might want to ensure that the $o(1)\prod_p\beta_p$
%will still be an $o(1)$. I don't really doubt it, because my $o(1)=o(\log^{-1}N)$ while $\prod_p\beta_p\ll\prod_{p\ll \log\log N}(1+1/p)\ll \log\log\log N$. But still maybe the truth is that I'll need $\prod_p\beta_p(1+o(1))$. I hope not, because in SW it's not like this.
\end{proof}
\section{The $W$-trick}
We perform the $W$-trick in the same spirit as \cite{GT2}, so as to eliminate biases modulo small primes. 
We introduce
$$
w=w(N)=\log\log N
$$
and $$
W=\prod_{p\leq w}p=\log^{1+o(1)}N.
$$
However, a simple one-dimensional example shows that one has to
adapt it to our situation where coefficients are unbounded.
Indeed, consider the system made of one form in one variable, namely
$n\mapsto qn+b$ with $q$ of size roughly $\log N$. The $W$-trick consists in writing
$$
\sum_{n\leq N}\Lambda(qn+b)=\sum_{a\in[W],(qa+b,W)=1}\frac{W}{\varphi(W)}\sum_{n\leq N/W}\frac{\varphi(W)}{W}\Lambda(Wqn+qa+b).
$$
But imposing that $(qa+b,W)=1$ does not ensure that the inner sum is $N/W(1+o(1))$, because $qa+b$ could well have a common factor greater than $w$ with $q$: when the coefficients are bounded, their factors are all less than $w$ for large enough $N$ but this is not the case any more in our setting. Moreover the
relevant average is not $W/\varphi(W)$ but $Wq/\varphi(Wq)$ which may be different if $q$ has prime factors larger than $w$.\footnote{Nevertheless, it is easy to check using Mertens' theorem that if $w=\log\log N$ and $q\leq \log^BN$ that
$$Wq/\varphi(Wq)=(1+o_B(1))W/\varphi(W).$$
}
This suggests that the coefficients of the system have to be taken into account to determine a suitable parameter $\Wt$ instead of $W$.

We fix once and for all an admissible system
$\Psi_1=(\psi_1,\ldots,\psi_{t}):\Z^{d_1}\rightarrow \Z^{t_1}$
in normal form
satisfying $\nor{\Psi_1}_{M,B}\leq L$ for some constants $B,L>0$.
Let 
\begin{equation}
\label{minors}
Q=\prod_{\substack{i\in[t_1],j\in[d_1]\\\dot{\psi}_i(e_j)\neq 0}}\dot{\psi}_i(e_j)\times
\prod_{\substack{1\leq i<k\leq t_1\\1\leq j<\ell\leq d_1\\
\dot{\psi}_i(e_j)\dot{\psi}_k(e_\ell)
-\dot{\psi}_i(e_\ell)\dot{\psi}_k(e_j)\neq 0
}}(\dot{\psi}_i(e_j)\dot{\psi}_k(e_\ell)
-\dot{\psi}_i(e_\ell)\dot{\psi}_k(e_j))
\end{equation}
be the product of
the nonzero minors of size 1 and 2 in the matrix $(\dot{\psi}_i(e_j))_{i,j}$; thus 
$Q=O_L(\log^{O_{d,t,B}(1)}N)$.
Moreover, if a prime $p$ is exceptional for $\Psi_1$, it must divide $Q$ (see the proof of Lemma \ref{lm:convBetap}).

We now introduce $\Wt=WQ=O(\log^{O(1)}N)$. The hypothesis that $\psi_i(n)>N^{9/10}$ means that if $\psi_i(n)$ is to be
a prime number, it has to be coprime to $\Wt$.
Writing $$\Lambda'_{\Wt,b}(n)=\frac{\varphi(\Wt)}{\Wt}\Lambda'(\Wt n+b)$$
we get
\begin{equation}
\label{WQtrick}
\sum_{n\in[M]^{d_1}}\prod_{i=1}^{t_1}\Lambda'(\psi_i(n))=
\sum_{\substack{a\in [\tilde{W}]^{d_1}\\
\forall i\in[t_1],(\psi_i(a),\widetilde{W})=1}}\left(\frac{\widetilde{W}}{\varphi(\widetilde{W})}\right)^{t_1}
\sum_{n\in [M/\widetilde{W}]^{d_1}}\prod_{i=1}^{t_1}
\Lambda'_{\widetilde{W},b_i(a)}
(\tilde{\psi}_i(n))+O(\log^{O(1)}N)
\end{equation}
where $\tilde{\psi}_i$ differs from $\psi_i$ only in the constant coefficient (by a multiple of $\Wt$) and $b_i(a)\in[\Wt]$ is the reduction mod $\Wt$ of $\psi_i(a)$.

Using the notion of subsystem introduced in Definition \ref{dfn:systems}, we reduce Proposition \ref{Boxes} to the following one.
\begin{prop}
\label{BoxesWtricked}
Let $\Psi_0=(\psi_1^0,\ldots,\psi_{t_0}^0):\Z^{d_0}\rightarrow \Z^{t_0}$ be a subsystem of $\Psi_1$. 
Suppose that $\Psi_0([M]^{d_0})\subset([M^{8/10},+\infty))^{t_0}$ and that $b_i\in[\widetilde{W}]$ is coprime to $\widetilde{W}$ for any $i\in[t_0]$. 
Then
\begin{equation}
\sum_{n\in [M/\Wt]^{d_0}}\prod_{i\in[t_0]}(\Lambda'_{\Wt,b_i}(\psi_i^0(n))-1)=o((M/\Wt)^{d_0})
\end{equation}
\end{prop}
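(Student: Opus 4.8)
The plan is to prove Proposition~\ref{BoxesWtricked} by the pseudorandom-majorant and Gowers-uniformity method of Green and Tao, adapted to the parameter $\Wt$ in place of $W$ and to forms with polylogarithmic linear coefficients. (Since $\prod_{i\in[t_0]}\Lambda'_{\Wt,b_i}(\psi_i^0(n))=\sum_{S\subseteq[t_0]}\prod_{i\in S}(\Lambda'_{\Wt,b_i}(\psi_i^0(n))-1)$, the whole family of these statements over subsystems of $\Psi_1$ is what one needs to split the main term from the errors, so it is enough to treat one fixed subsystem $\Psi_0$ with $t_0\ge 1$.) First I would construct an enveloping-sieve majorant $\nu=\nu_{\Wt}$ of GPY type at the scale $M/\Wt\gg N\log^{-O(1)}N$, with $0\le \Lambda'_{\Wt,b}(n)+1\ll \nu(n)$ for every $n\le M/\Wt$ and every $b$ coprime to $\Wt$, and such that $\nu$ (together with the constant weight $1$) obeys the linear-forms and correlation conditions relative to $\Psi_0$. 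This is the verification of \cite{GT2} with two cosmetic modifications: the modulus $W$ is replaced by $\Wt=\log^{O(1)}N$, and the residues $b_i$ vary with $i$. Neither affects the underlying divisor-sum estimates; indeed every nonzero linear coefficient $\dot{\psi}_i^0(e_j)$ divides $Q\mid\Wt$, which keeps all local computations within $O(1)$ of the classical ones.

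Next I would apply the generalized von Neumann theorem \cite[Theorem~7.1]{GT2}. Since $\abs{\Lambda'_{\Wt,b_i}-1}\ll \nu+1$, and $\Psi_0$, being a subsystem of $\Psi_1$, is itself in normal form (deleting forms only relaxes the defining vanishing conditions), this yields, for $k$ depending only on $d_1,t_1$,
$$
\abs{\E_{n\in[M/\Wt]^{d_0}}\prod_{i\in[t_0]}\bigl(\Lambda'_{\Wt,b_i}(\psi_i^0(n))-1\bigr)}\ll \min_{i\in[t_0]}\nor{\Lambda'_{\Wt,b_i}\circ\psi_i^0-1}_{U^{k}[M/\Wt]^{d_0}}+o(1),
$$
the error $o(1)$ coming from the pseudorandomness of $\nu$. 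We may assume the minimum is attained at $i=1$, so the task is reduced to showing $\nor{\Lambda'_{\Wt,b_1}\circ\psi_1^0-1}_{U^{k}[M/\Wt]^{d_0}}=o(1)$.

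For this I would follow the recipe announced in the introduction: \emph{equalize the coefficients} of $\psi_1^0(n)=\sum_j c_jn_j+c_0$ by partitioning $[M/\Wt]^{d_0}$ into $\log^{O(1)}N$ arithmetic-progression sub-boxes on which, after an affine change of variables, $\psi_1^0$ becomes $\lcm(c_1,\dots,c_{d_0})(m_1+\cdots+m_{d_0})+c_0'$, with $\lcm(c_1,\dots,c_{d_0})\mid\Wt$. Applying the Gowers-norm lemmas of \cite{GT2} --- restriction to, and averaging over, such sub-boxes, then contraction of the form $m_1+\cdots+m_{d_0}$, and finally absorbing the coefficient $\lcm(c_1,\dots,c_{d_0})$ into the modulus --- one bounds $\nor{\Lambda'_{\Wt,b_1}\circ\psi_1^0-1}_{U^{k}[M/\Wt]^{d_0}}$ by a bounded power of an average, over moduli $q=\log^{O(1)}N$ and residues $b$ coprime to $q$, of quantities $\nor{\tfrac{\varphi(q)}{q}\Lambda'(q\,\cdot+b)-1}_{U^{k}[M']}$ with $M'\gg N\log^{-O(1)}N$. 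Each of these is $o(1)$ by the $W$-tricked Gowers-uniformity estimate for the primes \cite[Theorem~7.2]{GT2}, whose proof is completed by the resolution of the M\"obius--nilsequences and inverse Gowers conjectures in \cite{MN,GI}; the only new point is that this estimate must be valid for an arbitrary modulus $q\le\log^{O(1)}N$ rather than just $q=W$, which is exactly where the Siegel--Walfisz theorem enters (ineffectively).

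The main obstacle is precisely this last reduction: one must have the uniformity estimate hold simultaneously for all polylogarithmic moduli and lose nothing when summing it over the $\log^{O(1)}N$ congruence classes of the preceding step. This works because those classes are all instances of one and the same asymptotic assertion --- ``$\nor{\tfrac{\varphi(q)}{q}\Lambda'(q\,\cdot+b)-1}_{U^{k}[M']}=o(1)$ for $q\le\log^{C}N$, $\gcd(b,q)=1$, $M'\gg N\log^{-C'}N$'' --- so the weighted average of the $o(1)$ contributions is again $o(1)$ with no quantitative rate needed. The ineffectiveness of \cite[Theorem~7.2]{GT2} is therefore harmless here, since the conclusion of Proposition~\ref{BoxesWtricked} is itself only $o((M/\Wt)^{d_0})$; it is nonetheless the ultimate reason why \eqref{myequation} cannot be sharpened to the stronger form discussed after Theorem~\ref{mytrm}.
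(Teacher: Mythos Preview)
Your overall strategy matches the paper's: construct a $\Wt$-tricked GPY majorant (Proposition~\ref{existencePRM} and Section~\ref{pseudorandom}), apply a generalised von Neumann theorem, equalise the linear coefficients of the surviving form by splitting into arithmetic progressions, and finish with the one-dimensional Gowers-uniformity estimate \cite[Theorem~7.2]{GT2} at a modulus $\log^{O(1)}N$ in place of $W$.

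There is, however, a genuine gap at the von Neumann step. You invoke \cite[Theorem~7.1]{GT2} directly, but that result is formulated in a cyclic group $\Z/N'\Z$ with $N'=O(X)$, and to use it one must first embed $[X]^{d_0}$ (where $X=M/\Wt$) into $(\Z/N'\Z)^{d_0}$. When the linear coefficients of $\Psi_0$ are of size $\log^{B}N$, the forms $\psi_i^0$ take values up to $X\log^{B}N$ on $[X]^{d_0}$, so this embedding suffers wraparound and the cyclic-group theorem does not transfer. The paper flags exactly this obstruction at the start of Section~5 and resolves it by proving a bespoke von Neumann theorem (Theorem~\ref{GVNT}) directly on the integer box $[X]^{d_0}$, via the weighted box-norm Cauchy--Schwarz of \cite[Appendix~B]{GT2}. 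Its output is not a standard Gowers norm of $f_1$ but the expression
\[
\E_{y}\,\E_{n^{(0)},n^{(1)}\in[X]^{s+1}}\prod_{\omega\in\{0,1\}^{s+1}}f_1\Bigl(\sum_{i} q_in_i^{(\omega_i)}+\psi_1(0,y)\Bigr),
\]
with the original coefficients $q_i$ still present; stripping the residual $\nu$-weights to reach this (equation~\eqref{norm} in the paper) genuinely uses the extra averaging variable available because $\Psi_0$ is in normal form with $t_0\ge 2$. Your notation $\nor{\Lambda'_{\Wt,b_1}\circ\psi_1^0-1}_{U^{k}[M/\Wt]^{d_0}}$ is a proxy for this quantity, but it is not something \cite[Theorem~7.1]{GT2} hands you, and it is precisely the step where the unbounded coefficients force new work. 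Once this integer-box von Neumann theorem is in place, your remaining steps---equalising the $q_i$ and invoking the uniformity estimate at modulus $Q'\Wt$---coincide with the paper's Section~5.3.
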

%In fact, $d_0=d_1$ but we introduce this notation for consistency.
We show how the reduction works, adapting the argument following
\cite[Proposition 5.1]{GT2}.

Applying successively the decomposition \eqref{WQtrick}, 
the trivial identity $x=x-1+1$, and Proposition
\ref{BoxesWtricked} to systems $\tilde{\Psi}$ where $\Psi$ is a subsystem of $\Psi_1$, we have
\begin{align*}
\sum_{n\in[M]^{d_1}}\prod_{i=1}^{t_1}\Lambda(\psi_i(n)) &=
\sum_{\substack{a\in [\Wt]^{d_1}\\
\forall i\in[t_1],(\psi_i(a),\widetilde{W})=1}}\left(\frac{\widetilde{W}}{\varphi(\widetilde{W})}\right)^{t_1}\left(\frac{M}{\widetilde{W}}\right)^{d_1}
(1+o(1))+O(\log^{O(1)}N)\\
&=M^{d_1}(1+o(1))\E_{a\in[\Wt]^{d_1}}\prod_{i\in[t_1]}\Lambda_{\Wt}(\psi_i(a))+O(\log^{O(1)} N).
\end{align*}
By the Chinese remainder theorem, and the fact that
$\Lambda_{p^k}(b)=\Lambda_p(b)$, we have
$$\E_{a\in[\Wt]^{d_1}}\prod_{i\in[t_1]}\Lambda_{\Wt}(\psi_i(a))
=\prod_{p\mid WQ}\E_{a\in\Zp{d_1}}\prod_{i\in[t_1]}\Lambda_p(\psi_i(a))
=\prod_{p\mid WQ}\beta_p.
$$
Moreover, if a prime $p$ does not divide $Q$, it is not
exceptional for $\Psi_1$. Then Lemma \ref{lm:convBetap} 
%\cite[Proposition C.5]{bienv}) 
implies 
that
$\beta_p=1+O(p^{-2})$, so that
$$\prod_{p\nmid WQ}\beta_p=\prod_{p>w}(1+O(p^{-2}))=1+O(w^{-1}).$$
This concludes the reduction.

\section{Reduction to a Gowers norm estimate}
Write $X=M/\Wt$ and fix a system $\Psi_0$ and a tuple $b_1,\ldots,b_{t_0}$ satisfying the conditions of Proposition \ref{BoxesWtricked}. 
If $t_0=1$, this proposition follows directly from the one-dimensional Siegel-Walfisz theorem \eqref{PNTAP}, so we suppose
$t_0\geq 2$.
Let $Q_0$ be the product of $2\times 2$ minors for the system $\Psi_0$
as defined by equation \eqref{minors}. In particular, $Q_0\mid Q$.
We have to prove
$$
\sum_{n\in[X]^d}\prod_{i\in[t_0]}F_i(\psi_i(n))=o(X^d)
$$
for $F_i=\Lambda'_{\Wt,b_i}-1$.
At this point Green and Tao move to some cyclic group $\Z/N'\Z$ with $N'=O(X)$, but we cannot do this here without any wrap around, because of the large (unbounded) coefficients.

%We need the generalised von Neumann theorem and a pseudorandom majorant whose averages can be estimated on linear systems with unbounded coefficients.
\subsection{A pseudorandom majorant}
Recall that we have fixed an admissible system
$$
\Psi_0=(\psi_i^0)_{i\in[t_0]} : \Z^{d_0}\longrightarrow\Z^{t_0}.
$$
We introduce the notion of a \emph{derived system}. This captures
the important properties of the systems that arise from repeated applications of the Cauchy-Schwarz inequality.
\begin{dfn}
A system $\Psi : \Z^d\rightarrow\Z^t$ of affine-linear forms is said to be \emph{derived} from $\Psi_0$ if
the following conditions are all satisfied:
\begin{itemize}
\item $d\leq 2d_0$;
\item $t\leq 2^{d_0}t_0$;
\item $\nor{\Psi}_{N,B}\ll \nor{\Psi_0}_{N,B}$;
\item any exceptional prime for $\Psi$ divides $Q_0$.
\end{itemize}
\end{dfn}
We now define pseudorandomness, based on the so-called linear forms condition; it is a fairly standard notion (see \cite[Section 6]{GT2}) but our definition
differs slightly to allow unbounded coefficients.
\begin{dfn}
%Let $L,B$ be positive constants and $Z$ an asymptotic parameter satisfying $Z\gg N\log^{-B}N$.
%Let $\Psi_0 : \Z^{d_0}\rightarrow \Z^{t_0}$ be an admissible system of 
%affine-linear forms 
%satisfying $\nor{\Psi_0}_{Z,B}\leq L$.
%Let $Q_0$ be the product of $2\times 2$ minors for this system
%as defined by equation \eqref{minors}.
We say that a function $\nu_Z : [Z]\rightarrow \R_+$ satisfies the
$\Psi_0$-\emph{linear forms condition}
if for any system $\Psi$
%=(\psi_1,\ldots,\psi_t):\Z^d\rightarrow \Z^t$ of affine-linear
%forms of finite complexity satisfying $d\leq 2d_0$ and
%$t\leq 2^{t_0}t_0$ whose non-zero linear coefficients divide
derived from $\Psi_0$
we have
$$
\E_{n\in [Z]^d}\prod_{i\in[t]}\nu(\psi_i(n))=1+o(1).
$$
We also say that $\nu$ is a $\Psi_0$-\emph{pseudorandom measure}.
\end{dfn}
The next proposition is about the existence of a pseudorandom
majorant for a $\Wt$-tricked von Mangoldt function.

\begin{prop}
\label{existencePRM}
For any integers $b_1,\ldots,b_{t_0}$ in $[\Wt]$ coprime to $\Wt$, for $Z\gg N\log^{-O(1)}N$, there exists a $\Psi_0$-pseudorandom measure $\nu$ on $[Z]$
such that
\begin{equation}
\label{Majorant}
1+\Lambda'_{\Wt,b_1}+\cdots+\Lambda'_{\Wt,b_{t_0}}\ll \nu
\end{equation}
on $[Z^{3/5},Z]$.
\end{prop}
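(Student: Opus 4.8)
The plan is to build $\nu$ as the standard Goldston--Y{\i}ld{\i}r{\i}m truncated-divisor-sum majorant, adapted to the modulus $\widetilde{W}$ rather than $W$, and then to verify the $\Psi_0$-linear forms condition directly. Concretely, for a parameter $R=Z^{\eta}$ with $\eta>0$ small (to be chosen in terms of $d_0,t_0$), I would set
$$
\Lambda_{R,\widetilde W,b}(n)=\left(\frac{\varphi(\widetilde W)}{\widetilde W}\right)\frac{\log R}{\text{(normalisation)}}\left(\sum_{m\mid \widetilde W n+b,\ m\leq R}\mu(m)\log(R/m)\right)^2,
$$
and then define
$$
\nu=\frac{1}{t_0+1}\left(\nu_{\mathrm{triv}}+\sum_{i=1}^{t_0}\Lambda_{R,\widetilde W,b_i}\right),
$$
where $\nu_{\mathrm{triv}}\equiv 1$ (or a smoothed version supported on $[Z]$). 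The majorisation \eqref{Majorant} on $[Z^{3/5},Z]$ is the easy part: exactly as in \cite[Section 9]{GT2}, whenever $\widetilde W n+b_i$ is prime and exceeds $R$ (which holds on $[Z^{3/5},Z]$ since $R=Z^\eta$ with $\eta<3/5$), the only divisor $m\leq R$ is $m=1$, so $\Lambda_{R,\widetilde W,b_i}(n)\geq c(\log R)\gg \Lambda'_{\widetilde W,b_i}(n)$ after fixing the normalisation constant; the $+1$ is absorbed by $\nu_{\mathrm{triv}}$.

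The substance is the linear forms condition. Here I would follow the Goldston--Y{\i}ld{\i}r{\i}m / Green--Tao computation: for a system $\Psi=(\psi_1,\ldots,\psi_t)$ derived from $\Psi_0$, expand
$$
\E_{n\in[Z]^d}\prod_{i\in[t]}\nu(\psi_i(n))
$$
multilinearly into $O_{d_0,t_0}(1)$ terms, each of the form $\E_n\prod_i \Lambda_{R,\widetilde W,b_{c(i)}}(\psi_i(n))$ (with some factors replaced by $\nu_{\mathrm{triv}}=1$), open the truncated divisor sums, swap the order of summation, and evaluate the inner average over $n\in[Z]^d$ of the indicator that $d_i\mid \widetilde W\psi_i(n)+b_{c(i)}$ for each $i$. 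Because $Z\gg N\log^{-O(1)}N$ and all moduli appearing are at most $R^2=Z^{2\eta}$ while the coefficients of $\Psi$ are $O(\log^B N)$, the box $[Z]^d$ is long enough in every coordinate to make the equidistribution of $n$ modulo $\lcm$ of the $d_i$ essentially exact, with a negligible error; this is where the hypothesis $Z\gg N\log^{-O(1)}N$ is used. The main term becomes a product of local densities, and after inserting $\widetilde W=WQ$ one checks the arithmetic factor at each prime: primes $p\leq w$ are killed by the coprimality of the $b_i$ to $\widetilde W$ exactly as in the usual $W$-trick; primes $p\mid Q$ but $p>w$ are handled because they divide $\widetilde W$ too, so again $\psi_i$ is invertible mod $p$; and primes $p\nmid \widetilde W$ contribute $1+O(p^{-2})$ precisely because such $p$ are non-exceptional for $\Psi$ (the fourth bullet in the definition of derived system guarantees any exceptional prime divides $Q_0\mid Q\mid \widetilde W$). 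Summing the resulting Euler product against the contour-integral evaluation of the $\log(R/m)$ weights — the Mertens-type computation of \cite[Appendix~D]{GT2} or \cite{GY} — yields $1+o(1)$, with the $o(1)$ uniform in the admissible choices of $b_i$ and in the (boundedly many) derived systems.

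The hard part is precisely this last verification under \emph{unbounded} coefficients: one must ensure that the singular series manipulations — in particular the claim that the local factor at each $p\nmid\widetilde W$ is $1+O(p^{-2})$ with an \emph{absolute} implied constant, so that the Euler product converges and the contour integral evaluates cleanly — survive when the linear coefficients of the $\psi_i$ grow like $\log^B N$. The key structural input is that $\widetilde W$ was chosen (via the product of minors $Q$) to contain every exceptional prime of $\Psi_0$, hence of every derived $\Psi$; so for $p\nmid\widetilde W$ no two forms $\psi_i,\psi_j$ are affinely related mod $p$ and no $\psi_i$ is constant mod $p$, which is exactly the input the $p^{-2}$-type bound needs, uniformly. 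I would also need to track that the $n$-equidistribution error, summed over the $O(R^{2t})$ choices of the $d_i$, remains $o(1)$; this forces $\eta$ small relative to $A,B,d_0,t_0$ and uses the lower bound $Z\gg N\log^{-O(1)}N$ in an essential way, since a genuinely shorter box would not equidistribute modulo the larger moduli that the unbounded coefficients produce.
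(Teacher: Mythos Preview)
Your proposal is correct and follows essentially the same route as the paper: build $\nu$ from Goldston--Y{\i}ld{\i}r{\i}m sieve weights attached to the residues $b_1,\ldots,b_{t_0}$ modulo $\widetilde W$, observe that majorisation on $[Z^{3/5},Z]$ is immediate once the sieve level $R$ is a small power of $Z$, and verify the linear forms condition by the Appendix~D computation of \cite{GT2}, using crucially that every exceptional prime for a derived system divides $Q_0\mid Q\mid\widetilde W$ so that primes $p\nmid\widetilde W$ contribute $1+O_{d,t}(p^{-2})$ with an absolute constant. The only presentational difference is that the paper packages the sieve asymptotic as a stand-alone Proposition~\ref{GPY} for \emph{arbitrary} admissible systems of size $\log^{O(1)}N$, which forces it to track an error factor $e^{O(X)}$ with $X=\sum_{p\in P_\Psi}p^{-1/2}\ll\sqrt{\log\log N}$; your more direct computation for the specific system $(\widetilde W\psi_i+b_{c(i)})_i$ sidesteps this, since at $p\mid\widetilde W$ the coprimality of the $b_i$ makes the local Euler factor identically~$1$.
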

The construction of the majorant, identical to \cite{GT2}, shall be explained in Section \ref{pseudorandom}.

\subsection{Generalised von Neumann theorem}
In spite of the impossibility to move to a cyclic group, we attempt to prove an analogue of
\cite[Proposition 7.1]{GT2}. Compared with the setting in a cyclic group, we cannot assume some linear coefficients to be 1, and
the range $[X]$ is not translation invariant. Recall that $\Psi_0
:\Z^{d_0}\rightarrow\Z^{t_0}$ is a fixed system of affine-linear
forms in $s$-normal form; thus without loss of generality,
write its first form as
$$
\psi_1(n_1,\ldots,n_{s+1},y)=q_1n_1+\cdots+q_{s+1}n_{s+1}+\psi_1(0,y)
$$
with $q_i\neq 0$ for all $i$ and $\prod_{j\in[s+1]}\dot{\psi_i}(e_j)=0$ for all $i>1$. We have dropped the exponent $0$ in the names of the form of the system $\Psi_0$ and shall always do so in the sequel.
We now state our variant of the von Neumann theorem. 

\begin{trm}
\label{GVNT}
%Let $\Psi=(\psi_1,\ldots,\psi_{t}):\Z^d\rightarrow \Z^t$ be a system of affine-linear forms  in $s$-normal form.
%Without loss of generality, suppose that
%$$
%\psi_1(n_1,\ldots,n_{s+1},y)=q_1n_1+\cdots+q_{s+1}n_{s+1}+\psi_1(0,y)
%$$
%with $q_i\neq 0$ for all $i$ and $\prod_{j\in[s+1]}\dot{\psi_i}(e_j)=0$ for all $i>1$.
Let $f_1,\ldots,f_{t_0} :\Z\rightarrow \R$ be functions and $\nu$ be a $\Psi_0$-pseudorandom measure such that $\abs{f_i}\leq \nu$ for all $i$. 
%Assume that $\nor{F}_{U^k[N]}$.
Then
$$
\abs{\E_{n\in [X]^d}\prod_{i\in[t_0]}f_i(\psi_i(n))}\leq \abs{\E_{y\in[X]^{d-s-1}}\E_{n^{(0)},n^{(1)}\in [X]^{s+1}}\prod_{\omega\in\{0,1\}^{s+1}}f_1(\sum_{i=1}^{s+1} q_in_i^{(\omega_i)}+\psi_1(0,y))}^{1/2^{s+1}}+o(1)
$$
\end{trm}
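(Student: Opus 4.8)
The plan is to mimic the standard Green--Tao generalised von Neumann argument (the proof of \cite[Proposition 7.1]{GT2}), applying the Cauchy--Schwarz inequality $s+1$ times to eliminate the forms $\psi_2,\ldots,\psi_{t_0}$ one at a time, but being careful that (a) we never pass to a cyclic group, so all averages stay over genuine boxes $[X]^{\bullet}$, and (b) the linear coefficients $q_1,\ldots,q_{s+1}$ of $\psi_1$ are not normalised to $1$, so they must be carried along explicitly. I would first record the bookkeeping: since $\Psi_0$ is in $s$-normal form, for each $j\in[s+1]$ there is a form $\psi_{i(j)}$ (with $i(j)>1$, and these may be arranged to be distinct after relabelling, or one argues more carefully as in \cite{GT2}) whose linear coefficient on $e_j$ vanishes. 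The first Cauchy--Schwarz step duplicates the variable $n_1$ into $n_1^{(0)},n_1^{(1)}$; because $\psi_{i(1)}$ does not see $n_1$, the factor $f_{i(1)}(\psi_{i(1)}(n))$ is constant in the duplicated variable and can be absorbed into the majorant, and after applying Cauchy--Schwarz in the remaining variables it drops out at the cost of replacing it by $\nu$ in a product that the pseudorandomness hypothesis controls.

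The heart of the argument is to iterate this $s+1$ times, after which $f_2,\ldots,f_{t_0}$ have all been removed and one is left with
$$
\abs{\E_{n\in[X]^d}\prod_{i\in[t_0]}f_i(\psi_i(n))}^{2^{s+1}}\ll \E\Bigl(\prod_{\omega\in\{0,1\}^{s+1}} f_1\bigl(\textstyle\sum_{i=1}^{s+1}q_in_i^{(\omega_i)}+\psi_1(0,y)\bigr)\Bigr)\cdot W(y,n^{(0)},n^{(1)},\ldots) + (\text{error}),
$$
where $W$ is a weight built out of products of $\nu$ evaluated at the various forms $\psi_i$ ($i\geq 2$) and their Cauchy--Schwarz descendants. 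I would then check that all these weight systems are \emph{derived} from $\Psi_0$ in the precise sense of the definition given in the excerpt: the number of variables at most doubles ($d\leq 2d_0$), the number of forms is at most $2^{d_0}t_0$, the $\nor{\cdot}_{N,B}$ size is preserved up to a constant because Cauchy--Schwarz only adds and subtracts existing forms (no new, larger coefficients are created), and any exceptional prime still divides $Q_0$ since the $2\times2$ minors of the duplicated system are $\pm$ those of $\Psi_0$. Granting that, the $\Psi_0$-linear forms condition forces $\E[W]=1+o(1)$ and, more importantly, a Cauchy--Schwarz/Gowers--Cauchy--Schwarz estimate (exactly as in \cite{GT2}) lets one replace $W$ by $1$ in the main term up to $o(1)$, yielding precisely the claimed bound after taking $2^{s+1}$-th roots and using $(a+o(1))^{1/2^{s+1}}\leq a^{1/2^{s+1}}+o(1)$.

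The main obstacle, and the only genuine departure from \cite{GT2}, is handling the non-translation-invariance of the range $[X]$ together with the unbounded coefficients $q_i$. In the cyclic-group setting one freely shifts variables and rescales so that $\psi_1$ becomes a "coordinate-like" form; here, after the $j$-th Cauchy--Schwarz step, the new variable $n_j^{(1)}$ ranges over $[X]$ but the combination $\sum q_i n_i^{(\omega_i)}$ can land outside $[X]$, so one must verify that the pseudorandom measure $\nu$ (which by Proposition \ref{existencePRM} lives on $[Z]$ with $Z\gg N\log^{-O(1)}N$, comfortably larger than the $O(\log^{O(1)}N\cdot X)$-sized inputs) still majorises $|f_i|$ at all the relevant arguments, and that the linear forms condition is stated for systems over $[Z]^d$ rather than a cyclic group — which is exactly how the definition of pseudorandom measure was set up in the excerpt. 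I would therefore carry the ranges explicitly through each Cauchy--Schwarz step, noting at each stage that every argument of every $f_i$ or $\nu$ lies in an interval of length $O(\log^{O(1)}N\cdot X)\subset[Z]$, so no wrap-around is needed and the derived-system formalism applies verbatim. The remaining steps — expanding the square, re-indexing the $2^{s+1}$ sign patterns, and the final Gowers--Cauchy--Schwarz to discard the weight — are routine and identical to \cite[Section 7]{GT2}.
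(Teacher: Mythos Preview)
Your overall architecture is right: iterated Cauchy--Schwarz (or equivalently the packaged Gowers--Cauchy--Schwarz of \cite[Corollary B.4]{GT2}) together with the $\Psi_0$-linear forms condition on derived systems. The check that the systems arising are derived from $\Psi_0$ is also essentially the paper's argument.

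However, you have misidentified the genuine obstacle, and the step you call ``routine and identical to \cite[Section 7]{GT2}'' is exactly the one that is \emph{not} routine here. After the Cauchy--Schwarz iterations, one is left with
\[
\E_{y}\E_{x^{(0)},x^{(1)}\in[X]^{s+1}}\Bigl(\prod_{\omega}f_1\bigl(\textstyle\sum_i q_ix_i^{(\omega_i)}+\psi_1(0,y)\bigr)\Bigr)\cdot W(x^{(0)},x^{(1)},y),
\]
where $W$ is the $\nu$-weight coming from the lower-order faces. You need to replace $W$ by $1$. In \cite{GT2} this is done by a change of variables that exploits the translation invariance of $\Z/N'\Z$; over the interval $[X]$ that change of variables is simply unavailable, and there is no ``identical'' argument to invoke. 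The issue is not (as you suggest) whether the arguments of $\nu$ stay inside some interval --- that is handled automatically by the way the linear forms condition is phrased over boxes --- but rather how to decouple the $f_1$-product from the weight $W$ without shifting variables.

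The paper's fix is a genuinely new ingredient that your proposal does not contain: because $t_0\geq 2$ and the system is in $s$-normal form, the form $\psi_1$ cannot depend on all $d$ variables (indeed $\psi_2$ has its own distinguished set of $s+1$ coordinates, at least one of which $\psi_1$ must miss). Writing $y=(y',z)$ with $z\in[X]^k$, $k\geq 1$, the coordinates unused by $\psi_1$, one sees that the $f_1$-product is independent of $z$ while $W$ genuinely depends on $z$. One then averages over $z$ first: by the linear forms condition $\E_z W=1+o(1)$, and a further Cauchy--Schwarz (using $\abs{f_1}\leq\nu$ and the linear forms condition once more for $j=0,1,2$) lets one replace $W$ by $1$ at cost $o(1)$. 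Without this observation --- that $\psi_1$ has spare variables to average over --- your sketch does not close, and the claim that the final Gowers--Cauchy--Schwarz step is ``identical to \cite{GT2}'' is where the argument would fail.
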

We adapt the proof of Proposition 7.1'' in \cite{GT2} and, for brevity,
use the notation from that proof without redefining it. 
%Initially, the only difference is that we do not assume
%any coefficient to be 1. 
%We rewrite the proof for peace of mind.
With this notation, the left-hand side equals
\begin{equation}
\label{LHStobound}
\E_{y\in[X]^{d-s-1},x\in[X]^{s+1}}\prod_{B\subseteq[s+1]}F_{B,y}(x_B).
\end{equation}
We observe that 
$$
F_{[s+1],y}(x_{[s+1]})=f_1(\sum_{i\in[s+1]}q_ix_i+\psi(0,y)).
$$
We have the bounds $$\abs{F_{B,y}}\leq \nu_{B,y}.$$
The introduction of the functions $F_{B,y},\nu_{B,y} : [X]^B\rightarrow \N$ hides the arithmetic nature of the setting,
blurring away the difference between cyclic group and
intervals of integers.
Thus we can  use \cite[Corollary B.4]{GT2}, 
with $A=[s+1]$ and 
$X_\alpha=[X]$ for all $\alpha \in [s+1]$.
%which is a 
%purely discrete analytic
%statement that does not involve cyclic groups. 
Hence we bound 
\eqref{LHStobound} by 
\begin{equation}
\label{afterCORB4}
\E_{y\in [X]^{d-s-1}}\nor{F_{[s+1],y}}_{\square (\nu_{[s+1],y})}
\prod_{B\subsetneq [s+1]}\nor{\nu_{B,y}}_{\square (\nu_{B,y})}^{2^{\abs{B}-(s+1)}},
\end{equation}
where we recall that
$$
\nor{F}_{\square (\nu_{B,y})}^{2^{\abs{B}}}=\E_{x^{(0)},x^{(1)}\in [X]^B}\prod_{\omega\in \{0,1\}^B}F(x^{(\omega)})
\prod_{C\subsetneq B}\nu_C((x_i^{(\omega_i)})_{i\in C},y).
$$

%$f_B((x_i)_{i\in B})=
%f(\sum x_i)$
By Hölder's inequality, it suffices to show that
\begin{equation}
\label{norm}
\E_{y\in [X]^{d-s-1}}\nor{F_{[s+1],y}}_{\square (\nu_{[s+1],y})}^{2^{s+1}}= \E_{y\in [X]^{d-s-1}}\E_{n^{(0)},n^{(1)}\in [X]^{s+1}}\prod_{\omega\in\{0,1\}^{s+1}}f_1(\sum q_in_i^{(\omega_i)}+\psi_1(0,y))+o(1)
\end{equation}
and that
$$
\E_{y\in [X]^{d-s-1}}\nor{\nu_{B,y}}_{\square (\nu_{B,y})}^{2^{\abs{B}}}=1+o(1)
$$
for all non empty $B\subseteq [s+1]$. 
To prove the latter, expand the left-hand side as
\begin{equation}
\label{averageNu}
\E_{y\in [X]^{d-s-1}}\E_{n^{(0)},n^{(1)}\in [X]^B}\prod_{C\subseteq B}\prod_{i: \Omega(i)=C}\prod_{\omega\in\{0,1\}^C}\nu(\psi_i((n_j^{(\omega_j)})_{j\in\Omega(i)},y)).
\end{equation}
which is an expression involving the average of $\nu$ on a 
system $$\Psi=(\psi_{i,\omega})_{i\in[t_0],\omega\in\{0,1\}^{\Omega(i)}} : \Z^d\rightarrow\Z^t$$ 
of linear forms and it easy to check that $d\leq 2d_0$ and $t\leq 2^{d_0}t_0$. It is also obvious that $\nor{\Psi}_{M,B}\ll\nor{\Psi_0}_{M,B}$. Let the prime $p$ be
exceptional for $\Psi$ and let us check that it divides $Q_0$; this would mean that $\Psi$ is derived from $\Psi_0$ and
thus we could apply the linear forms condition. So let $\psi_{i,\omega}\neq \psi_{k,\alpha}$ be two forms that are affinely related modulo $p$. Then if $i\neq k$, we conclude that $\psi_i$ and $\psi_k$ are related and thus the prime is exceptional for $\Psi_0$, which implies that it divides $Q_0$. Otherwise $i=k$ and thus $\omega\neq \alpha$, in other
words there exists $j\in[d_0]$ such that $\dot{\psi_i}(e_j)\neq 0$ and $\omega_j\neq \alpha_j$. Thus $p$ must divide $\dot{\psi}_i(e_j)$
and hence also $Q_0$.
By applying the linear forms condition, the term \eqref{averageNu} is $1+o(1)$.
%should be granted by the Goldston-Yildirim stuff.

Let us look at \eqref{norm}.
At this point, Green and Tao use the translation invariance of $\Z/N'\Z$ to perform a change of variable which is not possible here, but we make do without it.
As the system is in normal form and $t\geq 2$, the form $\psi_1$ does not genuinely use all the variables. Indeed, the form $\psi_2$ must also have its set of $s+1$ variables that it is the only one to use fully, in particular $\psi_1$ does not use them all. Let us thus assume that $\psi_1$ only uses $x_1,\ldots,x_{d-k}$ with $k\geq 1$, which enables us, by a slight abuse
of notation, to regard $\psi_1$ as a map from $\Z^{d-k}$ to $\Z$. Upon expanding the norm, the left-hand side of \eqref{norm} becomes
\begin{equation*}
\label{expansion}
\E_{x^{(0)},x^{(1)}\in [X]^{s+1},y\in [X]^{d-k-s-1}}\prod_{\omega\in\{0,1\}^{s+1}}f_1(\sum_{i=1}^{s+1} q_ix_i^{(\omega_i)}+\psi_1(0,y))\E_{z\in [X]^{k}}\prod_{\omega\in\{0,1\}^{s+1}}\prod_{C\subsetneq [s+1]}\nu_{C,(y,z)}(x_C^{(\omega_C)})
\end{equation*}
where $(y,z)$ is  the vector in $\Z^{d-s-1}$ obtained by concatenating $y$ and $z$.
We want to replace the inner expectation over $z$, which is a function of $(x^{(0)},x^{(1)},y)$ of average 1, by 1.
To do that, by Cauchy-Schwarz, it is enough to prove
$$\E_{x^{(0)},x^{(1)}\in [X]^{s+1},y\in [X]^{d-k-s-1}}\prod_{\omega\in\{0,1\}^{s+1}}\nu(\sum_{i=1}^{s+1} q_ix_i^{(\omega_i)}+\psi_1(0,y))=1+o(1)=O(1)$$
which follows directly from the linear forms condition, and
$$
\E_{x^{(0)},x^{(1)}\in [X]^{s+1},y\in [X]^{d-k-s-1}}\prod_{\omega\in\{0,1\}^{s+1}}\nu(\sum_{i=1}^{s+1} q_ix_i^{(\omega_i)}+\psi_1(0,y))\abs{\E_zW(x,y,z)-1}^2=o(1),
$$
where $W(x,y,z)=\prod_{\epsilon\in\{0,1\}^{s+1}}\prod_{C\subsetneq [s+1]}\nu_{C,(y,z)}(x_C^{(\epsilon_C)})$.
This amounts to
$$
\E_{x^{(0)},x^{(1)}\in [X]^{s+1},y\in [X]^{d-k-s-1}}\prod_{\omega\in\{0,1\}^{s+1}}\nu(\sum_{i=1}^{s+1} q_ix_i^{(\omega_i)}+\psi_1(0,y))(\E_zW(x,y,z))^j=1+o(1)
$$
for $j=0,1,2$. 
Let us inspect the left-hand side in the most intricate case,
namely $j=2$. Upon expanding the square, we get an expectation over $x^{(0)},x^{(1)},y,z^{(0)},z^{(1)}$, thus the system is in at most $2d_0$ variables. There are $2^{s+1}$ forms arising from $\psi_1$ and at most $2^{s+2}(t-1)$ other forms, which means together at most $2^{d_0}t_0$ forms. Now the reasoning we used
to analyse the average \eqref{averageNu} also applies here and
yields that the system is derived from $\Psi_0$.
Thus the linear forms condition applies and equation \eqref{norm} is proven, hence also Theorem \ref{GVNT}.
\subsection{A Gowers-norm estimate}
Together with the existence of a pseudorandom majorant provided by Proposition \ref{Majorant},
Theorem \ref{GVNT} reduces Proposition \ref{BoxesWtricked} to the following.
\begin{prop}
Let $b\in[\Wt]$ be coprime to $\Wt$. Let $B>0$ and $d\in \N$ be constants. Suppose $q_1,\ldots,q_{d}$ are divisors of $Q$ satisfying $q_i=O(\log^BN)$ while $c=O(N\log^B N)$.
Then we have
\begin{equation}
\label{eq:pseudoGowers}
\E_{x^{(0)},x^{(1)}\in [X]^{d}}\prod_{\omega\in\{0,1\}^{d}}(\Lambda'_{\Wt ,b}(\sum_{i=1}^d q_ix_i^{(\omega_i)}+c)-1)=o(1).
\end{equation}
\end{prop}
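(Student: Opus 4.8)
The claim \eqref{eq:pseudoGowers} is the Gowers-norm estimate towards which all the previous reductions were aimed; I would deduce it from the Gowers-uniformity estimate for the $W$-tricked primes, \cite[Theorem 7.2]{GT2} (unconditional thanks to \cite{MN,GI}), which gives $\nor{\Lambda'_{\Wt',b'}-1}_{U^{d}}=o(1)$ for the $U^{d}$-norm over any interval of length $\gg N\log^{-O(1)}N$, uniformly --- though ineffectively --- over moduli $\Wt'=\log^{O(1)}N$ and residues $b'$ coprime to $\Wt'$. The case $d=1$ is immediate: the left-hand side is then the square of $\E_{x\in[X]}(\Lambda'_{\Wt,b}(q_1x+c)-1)$, which is $o(1)$ by the one-dimensional Siegel--Walfisz theorem for the progression $\{\Wt q_1x+\Wt c+b\}$ of modulus $\Wt q_1=\log^{O(1)}N$ (note $q_1\mid Q\mid\Wt$, so $\Wt c+b$ is coprime to $\Wt q_1$), so I assume $d\geq 2$.

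The first step is to equalise the coefficients $q_i$. Each $q_i$ divides $Q$, hence $\Wt$, so I may split the range $[X]$ of each variable $x_i^{(j)}$ into its residue classes modulo $Q/q_i$; on the class $x_i^{(j)}=(Q/q_i)y_i^{(j)}+z_i^{(j)}$ one has $q_ix_i^{(j)}=Qy_i^{(j)}+q_iz_i^{(j)}$, so that $\sum_iq_ix_i^{(\omega_i)}+c=Q\sum_iy_i^{(\omega_i)}+e_\omega$ with $e_\omega=c+\sum_iq_iz_i^{(\omega_i)}=O(N\log^BN)$. Since $Q\mid\Wt$ gives $\varphi(\Wt Q)/(\Wt Q)=\varphi(\Wt)/\Wt$, the function $m\mapsto\Lambda'_{\Wt,b}(Qm+e_\omega)$ equals $\Lambda'_{\Wt Q,b_\omega}(m+e_\omega')$ for a suitable $b_\omega$ coprime to $\Wt Q$ and some $e_\omega'=O(N\log^BN)$. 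Consequently the left-hand side of \eqref{eq:pseudoGowers} is \emph{exactly} the average, over the $O(\log^{O(1)}N)$ choices of residue tuples $(z_i^{(j)})$, of box inner products
\[
\E_{y^{(0)},y^{(1)}}\ \prod_{\omega\in\{0,1\}^d}\bigl(\Lambda'_{\Wt Q,b_\omega}(\textstyle\sum_iy_i^{(\omega_i)}+e_\omega')-1\bigr),\qquad y_i^{(j)}\in\text{ an interval of length }\asymp Xq_i/Q,
\]
each of which is a $\Wt Q$-tricked average with all linear coefficients equal to $1$ and all ranges $\gg N\log^{-O(1)}N$.

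It then remains to see that each of these box inner products is $o(1)$, uniformly in the residue tuples --- which is exactly what \cite[Theorem 7.2]{GT2} delivers (the box Gowers--Cauchy--Schwarz inequality \cite[Corollary B.4]{GT2}, applied with a pseudorandom majorant, passes from a box inner product involving the $2^d$ functions $\Lambda'_{\Wt Q,b_\omega}(\cdot+e_\omega')-1$ to a product of their $U^d$-norms, and each of the $2^d$ factors is then $o(1)$); the average of the $O(\log^{O(1)}N)$ pieces is again $o(1)$. The step I expect to need the most care --- and the reason the bookkeeping above cannot be replaced by a crude Fourier expansion of the cut-offs --- is that \cite[Theorem 7.2]{GT2} furnishes only an \emph{ineffective} $o(1)$, so the whole reduction must lose not even a single power of $\log N$. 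This is why one decomposes along divisors of $\Wt$ (so that $\Lambda'_{\Wt,b}$ stays equidistributed along each resulting residue class: no cancellation between classes is being exploited, only a partition), why the sharp cut-offs are smoothed into bump functions when passing between a box and a cyclic group (a smooth bump has Fourier transform of bounded $\ell^1$-norm, whereas a sharp interval of length $\asymp Xq_i/Q$ sitting inside the range of size $\asymp N\log^{O(1)}N$ over which the cube values roam would cost a factor $\log^{O(1)}N$), and why one appeals to \cite[Theorem 7.2]{GT2} in the fully polylogarithmically-uniform form --- boxes of possibly different sidelengths, all $\gg N\log^{-O(1)}N$, with varying residues and moduli --- that the machinery of \cite{GT2,MN,GI} does provide.
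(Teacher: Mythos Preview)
Your approach is essentially the paper's: split each $x_i^{(j)}$ into residue classes to equalise all the linear coefficients (you use modulus $Q/q_i$ and common coefficient $Q$; the paper uses $Q_i=\prod_{j\neq i}q_j$ and common coefficient $Q'=\prod_j q_j$, but either choice works since all prime factors already divide $\Wt$), recognise $\Lambda'_{\Wt,b}(Q'\,\cdot\,+e)$ as $\Lambda'_{\Wt Q',a_\omega}$ with $a_\omega$ coprime to $\Wt Q'$, and then invoke a pseudorandom majorant and the generalised von Neumann machinery to reduce to the Gowers-norm estimate \cite[Theorem~7.2]{GT2}. The one place where the paper is more explicit is the treatment of the unequal ranges $[X/Q_i]$ that arise after the residue split: rather than smoothing the cut-offs as you sketch, the paper re-uses the box-decomposition argument of Section~3 (covering $\prod_i[X_i]$ by cubes of a common sidelength $Y\gg N\log^{-O(1)}N$, this time bounding the boundary boxes via the majorant and the linear-forms condition rather than by positivity) so that \cite[Proposition~7.1]{GT2} can be applied directly on a genuine cube with no $\log$-loss.
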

The progress compared to Proposition \ref{BoxesWtricked} is that each variable $x_i^{(\epsilon)}$
for $i\in[d]$ and $\epsilon\in\{0,1\}$ is affected throughout the system
by one and the same coefficient $q_i$. We now attempt to transform
the system so that all variables have the same coefficient $Q'$;
the price we pay is that the variables will not have the same ranges any more.

%Notice that if the $q_i$ were equal to 1 and if $c=O(N)$, this
%is actually \cite[Theorem 7.2]{GT2} (almost! because there the expectation is over tuples.
%If we only required that the coefficients $q_i$ be equal to 1, this 
%would be \cite{GTFK}. 
%To get back to the case of coefficients equal to 1
To this effect, we introduce
$$
Q_i=\prod_{j\neq i}q_j
$$
and variables $n_i^{(\omega_i)},m_i^{(\omega_i)}$
such that $x_i^{(\omega_i)}=Q_in_i^{(\omega_i)}+m_i^{(\omega_i)}$.
Then the left-hand side of equation \eqref{eq:pseudoGowers} 
decomposes as
$$
\E_{m_i^{(\omega_i)}\in [q_i]}\E_{n_i^{(\omega_i)}\in [X/q_i]}
\prod_{\omega\in\{0,1\}^d} (\Lambda'_{\Wt,b}(\sum_{i=1}^d Q'n_i^{(\omega_i)}+q_im_i^{(\omega_i)}+c)-1)+o(1),
$$
where $Q'=q_iQ_i$ for any $i$.

We recognise the function
$$
n\mapsto F_a(n)=\frac{\varphi(\Wt)}{\Wt}\Lambda'(Q'\Wt{}n+a)=\Lambda'_{Q'\Wt,a},
$$
where the equality holds because $\prod_{p\mid Q'}p$ divides $Q$ and hence $\Wt$.
%,and $a$ is coprime to $\Wt$
The parameters $a$ occurring are
$$
a_\omega=\Wt(\sum_{i=1}^d q_im_i^{(\omega_i)}+c)+b
$$
and given that $(b,\Wt)=1$, we also have $(a_\omega,\Wt)=1$ and finally $(a_\omega,\Wt{}Q')=1$.

We remark that for any tuple $a\in[\Wt{}Q']^{2^d}$ of integers coprime
to $\Wt{}Q'$, we can create a common $\Xi$-pseudorandom majorant for the functions $1+F_{a_\omega}$ where $\Xi=(\xi_\omega)_{\omega\in\{0,1\}^d}$ is defined by
$$
\xi_{\omega}=(n_1^{(0)},\ldots,n_d^{(0)},n_1^{(1)},\ldots,n_d^{(1)})
\longmapsto \sum_{i=1}^dn_i^{(\omega_i)}.
$$
In fact we
can rewrite Proposition 4.2 with $\Wt{}Q'$ instead of $\Wt$, because $Q'$ still satisfies $Q'=O(\log^{O(1)}N)$, 
a bound which 
allows us to control the effect of exceptional primes in Proposition \ref{GPY}.

Thus we are left to prove that
$$
\E_{n_i^{(\omega_i)}\in [X_i]}\prod_{\omega\in\{0,1\}^d} (F_{a_\omega}(\sum_{i=1}^d n_i^{(\omega_i)})-1)=o(1)
$$
where each $X_i$ satisfies $N\log^{-C}N\ll X_i\leq N$.
Letting $Z=\max_i X_i$ and $K=\prod_i[X_i]$, we have $K\subset [Z]^d$ and $\Vol(K)\gg Z^d\log^{-C'}Z$. Thus we can apply the same
reasoning as in Section 2 where we approximated such a convex body
by a set of small boxes of equal 
sides,\footnote{The reader might object that we then used the positivity of the function to average, which is not available here, but we 
can just as well use the majorant and the linear forms condition
to bound the contribution of the few boxes included in $K''$ but not in $K'$.}
and it suffices to prove that
\begin{equation}
\label{almostGowers}
\E_{n^{(0)},n^{(1)}\in[Y]^d}\prod_{\omega\in\{0,1\}^d}(F_{a_\omega}(\sum_{i=1}^dn_i^{(\omega_i)})-1)=o(1)
\end{equation}
for some $Y\gg N\log^{-D}N$.
%By Gowers-Cauchy-Schwarz, this is going to be controlled by
%$$
%\prod_\omega\E_{n^{(0)},n^{(1)}\in[Y]^d}\prod_{\epsilon}F_{\omega}(\sum_in_i^{(\epsilon_i)})
%$$
Now that the linear forms have bounded coefficients (namely 0 and 1), there is no more objection to the use of Green-Tao's generalised von Neumann theorem \cite[Proposition 7.1]{GT2},
as long as the functions $1+\Lambda'_{Q'\Wt,a_\omega}$ are dominated
by a pseudorandom measure, in the sense of Green-Tao \cite[Definition 6.2]{GT2}. Green and Tao proved the existence
of such a majorant, except that they had $W$ instead of $Q'\Wt$,
but this makes no difference as $Q'\Wt$ is $w$-smooth and divisible by $W$. See Section 6 for a review of the construction of
a majorant and a closer scrutiny of the role of the
linear coefficients in the linear forms condition.
%\footnote{The notion of pseudorandom measure is slightly different in \cite{GT2}, but their second condition, the correlation condition, is not required anymore since densification trick \cite{FCZ} and its implementation by Tao and Ziegler \cite{TZ1}. The difference between averages on cyclic groups and intervals of integers is a minor technicality.} 
Thus equation \eqref{almostGowers} follows from the claim
\begin{equation}
\label{eq:claim}
\nor{F_a-1}_{U^k([Y'])}=
\nor{\Lambda'_{Q'\Wt,a}-1}_{U^k([Y])}=o(1)
\end{equation}
for any $a\in [Q'\Wt]$ coprime to $Q'\Wt$.
This is almost \cite[Proposition 7.2]{GT2}.
Compared with that proposition, we have $W'=Q'\Wt=O(\log^{O(1)}N)$ instead of $W$.
One can inspect attentively the remainder of the argument of Green and Tao to notice that the required properties of $W$ are
\begin{itemize}
\item that it be divisible by all primes $p\leq w$ for some function $w=w(N)$ tending to infinity;
\item that it be $O(\log^{O(1)}N)$; this is crucial when applying the Möbius-Nilsequence theorem \cite{MN}, which comes with a saving
of size an arbitrary power of $\log N$.
\end{itemize}
These properties are equally satisfied by $Q'\Wt$. Thus the claim \eqref{eq:claim} holds. To complete the proof of Theorem \ref{mytrm}, there remains only to prove Proposition \ref{existencePRM}, which we shall do in the next section.

\section{The linear forms condition}
\label{pseudorandom}
We recall the notation from \cite{GT2}
$$
\Lambda_{\chi,R}(n)=\log R\left(\sum_{d\mid n}\mu(d)\chi\left(\frac{\log d}{\log R}\right)\right)^2
$$
for $R=N^{\gamma}$ a small power of $N$, a smooth function $\chi$ supported on $[-1,1]$
satisfying $\chi(0)=1$ and $\int{\chi'^2}=1$. The function 
$\Lambda_{\chi,R}$ is positive and
if $n$ is a prime
larger than $R$, then $\Lambda_{\chi,R}(n)=\log R$,  so that
$\Lambda'\leq \gamma^{-1}\Lambda_{\chi,R}$ on $[N^{\gamma},N]$.

We need to extend the range of application of \cite[Theorem D.3]{GT2}. There it is stated only for forms whose constant coefficients are bounded, although it was then applied to
other natural systems such as $\Phi=W\Psi+c$, as the extension
was straightforward. In \cite[Appendix A]{GTFK}, the constant coefficients
are allowed to be as large $N^{1.01}$.

We claim that the estimate can be pushed further.
\begin{prop}
\label{GPY}
Let $L,B$ be positive constants and 
$\Psi=(\psi_1,\ldots,\psi_t)$ an admissible system of affine-linear forms satisfying $\nor{\Psi}_{Z,B}\leq L$. 
%and
%such that no prime divides any $\psi_i$ (i.e. no form is constantly 0 on $(\Z/p\Z)^d$).
%Suppose also that no two linear parts $\dot{\psi_i},\dot{\psi_j}$ are rational multiples of each other. 
%Call a prime exceptional if there exists two forms $\psi_i,\psi_j$
%whose linear parts become proportional when reduced modulo $p$.
Let $P_\Psi$ be the set of exceptional primes and
$X=\sum_{p\in P_\Psi}p^{-1/2}$.
Then 
%for any convex body $K\in [-N,N]^d$ satisfying $\Vol(K)\gg N^d\log^{-A}N$ for some $A>0$, 
%we have
$$
\sum_{n\in [Z]^d}\prod_{i\in[t]}\Lambda_{\chi,R}(\psi_i(n))=Z^d\prod_p\beta_p(1+o(1)).
$$
\end{prop}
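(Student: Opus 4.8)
The plan is to follow the strategy of Green and Tao in \cite[Appendix D]{GT2} and its refinement in \cite[Appendix A]{GTFK}, tracking the dependence on the size of the coefficients and on the exceptional primes. First I would open up the square in the definition of $\Lambda_{\chi,R}$ and interchange the order of summation, writing
$$
\sum_{n\in[Z]^d}\prod_{i\in[t]}\Lambda_{\chi,R}(\psi_i(n))=(\log R)^t\sum_{\mathbf{d},\mathbf{d}'}\Bigl(\prod_{i}\mu(d_i)\mu(d_i')\chi\bigl(\tfrac{\log d_i}{\log R}\bigr)\chi\bigl(\tfrac{\log d_i'}{\log R}\bigr)\Bigr)\#\{n\in[Z]^d:\ \lcm(d_i,d_i')\mid\psi_i(n)\ \forall i\},
$$
where $\mathbf{d}=(d_1,\dots,d_t)$, $\mathbf{d}'=(d_1',\dots,d_t')$. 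The inner cardinality is evaluated by the standard argument: the simultaneous divisibility conditions cut out a union of residue classes modulo $D=\lcm_i\lcm(d_i,d_i')$, and the count is $Z^d\alpha(\mathbf{d},\mathbf{d}')/D^d+O(\text{boundary})$, where the local density $\alpha$ is multiplicative in $D$ and the boundary term is controlled because $D\leq R^{2t}=N^{2t\gamma}$ is a small power of $N$ while $Z\gg N\log^{-O(1)}N$; here it is crucial that $\gamma$ is chosen small compared to everything else. The local density at a prime $p$ is governed by how many of the forms $\psi_i$ are forced to vanish mod $p$, which is where the structure of $\Psi$ mod $p$ — and hence the exceptional primes — enters.

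The heart of the matter is then to reduce the resulting main term to an Euler product and identify it with $\prod_p\beta_p$. Factoring the sum over $\mathbf{d},\mathbf{d}'$ into local pieces and using the standard contour-integral / Fourier analysis of $\chi$ (as in \cite[Appendix D]{GT2}), the main term becomes $Z^d\prod_p c_p\cdot(1+o(1))$ where $c_p$ is a local factor built from the $p$-adic densities above; a Mertens-type computation shows $c_p=\beta_p\cdot(1+O_{d,t}(p^{-2}))$ for $p\notin P_\Psi$, and for the finitely many $p\in P_\Psi$ one checks directly that the singular-series computation still produces exactly $\beta_p$ (the point being that $\Lambda_{\chi,R}$ has the ``right'' first-moment behaviour in every residue class coprime to $p$, by Siegel--Walfisz for the underlying one-dimensional sums). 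Since $\Psi$ is admissible, Lemma \ref{lm:convBetap} guarantees $\prod_p\beta_p$ converges and is nonzero, so collecting the local factors gives $Z^d\prod_p\beta_p(1+o(1))$.

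The main obstacle is uniformity: the coefficients $\dot\psi_i(e_j)$ and $\psi_i(0)/Z$ are now as large as $\log^B Z$, so the number of exceptional primes is no longer $O_{d,t,L}(1)$ but only $O(\log\log Z)$ (all exceptional primes divide the quantity $Q$ from \eqref{minors}, of size $\log^{O(1)}Z$), and the error terms accumulated over these primes — quantified by $X=\sum_{p\in P_\Psi}p^{-1/2}$ — must be shown to be $o(1)$. This is where the hypothesis $\nor{\Psi}_{Z,B}\leq L$ is used decisively: it forces $P_\Psi\subseteq\{p:p\mid Q\}$ with $\log Q=O(\log\log Z)$, so $X=O((\log\log Z)/\sqrt{w_0})\to 0$ for any exceptional prime exceeding a slowly growing $w_0$, while the boundedly many small exceptional primes contribute a genuine (finite, correct) factor $\beta_p$ rather than an error. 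One also has to verify that enlarging the modulus $D$ up to $N^{2t\gamma}$ does not spoil the boundary estimate when $\Vol$ of the box is only $Z^d\log^{-O(1)}Z$, which is immediate since a power of $N$ beats any power of $\log N$. The remaining steps — the contour shift for $\chi$, the factorisation into an Euler product, and the identification of each local factor — are routine adaptations of \cite[Appendix D]{GT2} and \cite[Appendix A]{GTFK}, requiring only that every implied constant be tracked through as a function of $d,t,B,L$.
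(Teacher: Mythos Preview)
Your overall strategy---follow \cite[Appendix D]{GT2} and \cite[Appendix A]{GTFK}, track the effect of unbounded coefficients through the exceptional primes---is exactly the paper's approach. However, there is a genuine gap in your error analysis at the exceptional primes.

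You assert that $X=\sum_{p\in P_\Psi}p^{-1/2}$ is $o(1)$, arguing that the large exceptional primes contribute $O((\log\log Z)/\sqrt{w_0})$ while the small ones are ``boundedly many''. Neither claim is correct. The system $\Psi$ varies with $N$, so there is no reason for the small exceptional primes to be $O(1)$ in number; all one knows is that every exceptional prime divides $Q=\log^{O(1)}Z$, whence $|P_\Psi|\leq\omega(Q)=O(\log\log Z)$. In the worst case these are the first $\omega(Q)$ primes, giving
\[
X\leq \sum_{k\leq \omega(Q)}p_k^{-1/2}\ll \sum_{n\ll\log\log Z}n^{-1/2}\ll\sqrt{\log\log Z},
\]
which tends to infinity, not to zero. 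What actually emerges from the Green--Tao computation is an error term of the shape $e^{O(X)}\log^{-1/20}R$ (the exponential in $X$ comes from the multiplicative accumulation of $1+O(p^{-1/2})$ factors over exceptional primes in the Euler product analysis). The point is then that $e^{O(\sqrt{\log\log Z})}$ grows slower than any power of $\log Z$, so it is beaten by the $\log^{-1/20}R$ saving. You need this quantitative interplay; the bound $X=o(1)$ is simply false.

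A secondary issue: your remark that the exceptional-prime factors $\beta_p$ emerge ``by Siegel--Walfisz for the underlying one-dimensional sums'' is off the mark. The function $\Lambda_{\chi,R}$ is a truncated divisor sum, not the von Mangoldt function, and no information about primes in progressions enters here; the local factors $\beta_p$ at exceptional primes fall out of the same Euler-product bookkeeping as at ordinary primes, just without the simplification $\beta_p=1+O(p^{-2})$.
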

%Here compared with the original statement, the error term is $o(\Vol(K))$ instead of $o(N^d)$. 
%Again, thanks to the
%extension to large shifts from
%\cite[Appendix A]{GTFK} and the reduction shown in Section 3 of this paper\footnote{The reduction works here too because the conditions on $\Psi$ are inherited by its
%translate $\Psi+c$ and because $P_\Psi$ depends only on the linear part of $\Psi$. This is actually why we defined $P_\Psi$
%in this way, while in \cite[Proposition D.3]{GT2} it is the set 
%of primes modulo which two forms are proportional.}, we just have to prove this on boxes.

We now carefully analyse what needs to be changed in the proof of Theorem D.3 of \cite{GT2} when the linear coefficients
are of size up to $\log^B N$. As remarked in \cite[Appendix A]{GTFK}, the first place where the bound on the coefficients
is used is page 1833, where it is said that $\alpha(p,B)=O(1/p)$
for $p$ large enough. In fact, as we have assumed from the outset
that no form of the system $\psi_i$ is divisible by any prime $p$,
this is always the case. This also means that $\beta_p=1+O(1/p)$
for all $p$. The next moment where
Green and Tao invoke the size of the coefficients is 
to get the bound $\beta_p=1+O(p^{-2})$;
but in fact it is valid as soon as $p$ is not exceptional, no matter the size of the coefficients. 
As seen in the proof of Lemma \ref{lm:convBetap},
the set $P_\Psi$ of exceptional primes is finite but
its cardinality may increase to infinity with $Z$.
What we have seen implies that the asymptotic formula
$$
\sum_{n\in [Z]^d}\prod_{i\in[t]}\Lambda_{\chi,R}(\psi_i(n))=Z^d\prod_p\beta_p(1+ e^{O(X)}\log^{-1/20}R)
$$
is still valid, where $X=\sum_{p\in P_\Psi}p^{-1/2}$. It remains to bound $X$.

If $p\in P_{\Psi}$, as already seen in Lemma \ref{lm:convBetap}, $p$ divides
the parameter $Q=\log^{O(1)}N$ introduced in equation \eqref{minors}.
Letting $\omega(Q)=O(\log Q)=O(\log\log N)$ be the number of 
its prime factors, we have
$$
X=\sum_{p\in P_\Psi}p^{-1/2}
\leq \sum_{p\leq \omega(Q)}p^{-1/2}\leq \sum_{n\ll \log\log N}n^{-1/2}\ll \sqrt{\log\log N}.
$$
Thus $e^{O(X)}\ll\log^{1/30} N$ while $\log^{-1/20} R\ll \log^{-1/20} N$, which gives
$e^{O(X)}\log^{-1/20} R=o(1)$. This completes the verification of
Proposition \ref{GPY}.

From Proposition
\ref{GPY}, the proof of Proposition \ref{existencePRM}, that is, the construction of a majorant satisfying the adequate
linear forms condition, runs just as in the paper of Green and Tao.
We provide it here. Let
$b_1,\ldots,b_{t_0}$ be integers in $[\Wt]$ coprime to $\Wt$.
Let $Z$ be an asymptotic parameter satisfying $Z\gg N\log^{-A}N$ for some constant $A>0$. Then, writing
$$
\nu(n)=\frac{1}{t_0+1}\left(1+\frac{\varphi(\Wt)}{\Wt}\sum_{i\in[t_0]}\Lambda_{\chi,R}(\Wt{}n+b_i)\right),
$$
we have
the bound \eqref{Majorant} for $n\in [Z^{3/5},Z]$
if $\gamma <3/5$.
To show that $\nu$ is a $\Psi_0$-pseudorandom measure, it is enough to
check that for any system $\Psi : \Z^d\rightarrow\Z^t$ derived from $\Psi_0$, any $s\leq t$ and any sequence $1\leq j_1<\cdots<j_s\leq t$, we have
\begin{equation}
\label{unpluso}
\left(\frac{\varphi(\Wt)}{\Wt}\right)^s\E_{n\in [Z^d]}\prod_{i\in [s]}\Lambda_{\chi,R}(\Wt\psi_{j_i}(n)+b_{j_i})=1+o(1).
\end{equation}
To this aim, observe that the hypotheses ensure that the system
$\Phi=(\Wt\psi_{j_i}+b_{j_i})_{i\in[s]}$ is
is admissible. 
Moreover, because $\Wt=O(\log^{O(1)}N)$,
the bound $\nor{\Phi_{Z,B}}=O(1)$ holds for some $B=O(1)$.
% such that
%no prime $p$ divides any of the form (indeed, if $p\mid\Wt$ then
%$(p,b_k)=1$ for any $k$ and otherwise $p\nmid Q$, so $p$ does not divide $\Wt\dot{\psi_i}$ for any $i$). Moreover, no two
%linear parts
%$\Wt\dot{\psi_i},\Wt\dot{\psi_j}$ can be proportional because of the finite complexity of $\Psi$.
So we can use Proposition
\ref{GPY}. For $p\mid\Wt$, the local factor is simply $(p/(p-1))^s$
while if $p\nmid \Wt$, the prime $p$ is not exceptional for $\Psi_0$ and hence for $\Phi$, which implies $\beta_p=1+O(p^{-2})$ by Lemma \ref{lm:convBetap}.
Thus
$$
\prod_p\beta_p=\left(\frac{\Wt}{\varphi(\Wt)}\right)^s\prod_{p>w}(1+O(p^{-2}))=\left(\frac{\Wt}{\varphi(\Wt)}\right)^s(1+O(w^{-1}).
$$
%Because $\Wt=O(\log^{O(1)}N)$, we have $\Wt/\phi(\Wt)\ll \log\log\log N$ while $w^{-1}\ll (\log\log N)^{-1}$ so that
%$$
%\prod_p\beta_p=\left(\frac{\Wt}{\phi(\Wt)}\right)^s+o(1).
%$$
This compensates exactly for the factor $(\varphi(\Wt)/\Wt)^s$ and
finishes the proof of equation \eqref{unpluso}, hence also of Proposition \ref{existencePRM}, and finally of Theorem \ref{mytrm}.

In the next section, we give a nice application of our higher-dimensional Siegel-Walfisz theorem.

\section{Application to the primes $p$ such that $p-1$ is squarefree}

The set of primes $p$ such that $p-1$ is squarefree is a well-known dense subset of the primes of density $\sum_a\frac{\mu(a)}{\varphi(a^2)}=\prod_p(1-1/p(p-1))$; this is a theorem of Mirsky \cite{Mirsky}. As any dense subset of the primes, it contains arbitrarily long arithmetic progressions, by the Green-Tao theorem \cite{GT1}. However, no asymptotic was available
so far for the count of
$k$-term progressions in this set, nor in fact in any dense subset of the primes (except residue classes). As a consequence
of Theorem \ref{mytrm},
we now prove such an
asymptotic; in fact, we obtain an asymptotic for the number of solutions in this set of primes to any
finite complexity system of equations. 
%Given that the theorem of Mirsky is a typical application of the Siegel-Walfisz it is reasonable to expec

For convenience, let
$F$ be the von Mangoldt function
restricted to the squarefree shifted primes, that is $F(n)=\Lambda(n+1)\mu^2(n)$. Also we denote by $\N$ the set of positive integers.
%For a $t$-tuple $a$ of positive integers,
%write $L_a$ for the affine sublattice (possibly empty)
%$$
%L_a=\{n\in\Z^d\mid \forall i\in[t] a_i^2\mid\psi_i(n)\}.
%$$
%If it is not empty, 
%
%we write $\beta_p(a)$ for the local
%factor corresponding to the system of the forms
%$a_i^2\psi_i+1$, thus
%\beta_p(a)=\E
\begin{trm}
\label{appliSQF}
Let $\Psi : \Z^d\rightarrow\Z^t$ be a system of affine-linear forms
of finite complexity and $K\subset [-N,N]^d$ a convex body. Suppose that the linear coefficients are $O(1)$, the constants ones are $O(N)$ and that $\Psi(K)\subset \N^t$.
Then there exists a constant $C(\Psi)$ (possibly equal to 0) such
that
\begin{equation}
\label{eq:appliSQF}
\sum_{n\in K\cap\Z^d}\prod_{i\in[t]}F(\psi_i(n))=C(\Psi)\Vol(K)+o(N^d).
\end{equation}
\end{trm}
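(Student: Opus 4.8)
The plan is to expand the squarefree condition via $\mu^2(m)=\sum_{a^2\mid m}\mu(a)$, truncate so that the moduli remain of size $\log^{o(1)}N$, and reduce each resulting piece to a count governed by Theorem~\ref{mytrm} after splitting into residue classes. Throughout set $\tilde\psi_i=\psi_i+1$, so that $F(\psi_i(n))=\Lambda(\tilde\psi_i(n))\mu^2(\tilde\psi_i(n)-1)$, and introduce the local densities
\[
\gamma_p=\E_{a\in(\Z/p^2\Z)^d}\prod_{i\in[t]}\left(\frac{p}{p-1}\1_{p\nmid\psi_i(a)+1}\right)\1_{p^2\nmid\psi_i(a)},
\]
which an argument modelled on Lemma~\ref{lm:convBetap} shows satisfy $\gamma_p=1+O_{d,t}(p^{-2})$ for all but finitely many $p$. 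First I would dispose of two degenerate cases. If $\Vol(K)\le N^d/\log N$, a Selberg sieve bound gives $\sum_{n\in K\cap\Z^d}\prod_iF(\psi_i(n))\ll\Vol(K)=o(N^d)$ and $C(\Psi)\Vol(K)=o(N^d)$, so \eqref{eq:appliSQF} is trivial; hence assume $\Vol(K)\gg N^d/\log N$. If some $\gamma_p=0$, I set $C(\Psi)=0$: either every coefficient of some $\psi_i$ is divisible by $p^2$, whence the sum vanishes identically, or for every residue mod $p$ some $\psi_i(n)+1$ is forced divisible by $p$, so that $\prod_iF(\psi_i(n))$ is supported on $O_p(1)$ affine hyperplanes and the sum is $O(N^{d-1}\log N)=o(N^d)$. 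From now on all $\gamma_p>0$ and $C(\Psi):=\prod_p\gamma_p$ (convergent by the above); the case $t=1$ with $\psi_1$ constant is also trivial, while for $t\ge2$ finite complexity forbids constant forms.

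Next I would truncate. Fix $z=z(N)\to\infty$ growing slowly (say $z=\log\log\log N$; only $z\to\infty$ is needed). Replacing each $\mu^2(\psi_i(n))$ by $\sum_{a_i\le z,\,a_i^2\mid\psi_i(n)}\mu(a_i)$ in turn, the cumulative error is, by the Selberg sieve applied to the $t$ primality conditions on $\psi_j(n)+1$ together with a divisibility $a^2\mid\psi_i(n)$,
\[
\ll_{d,t,\varepsilon}\Vol(K)\sum_{a>z}a^{\varepsilon-2}\ll\frac{N^d}{\sqrt z}=o(N^d).
\]
Thus it suffices to treat $S(\mathbf a)=\sum_{n\in K,\ a_i^2\mid\psi_i(n)\ \forall i}\prod_i\Lambda(\psi_i(n)+1)$ for each $\mathbf a=(a_1,\dots,a_t)$ with $a_i\le z$. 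Put $q=\lcm(a_1^2,\dots,a_t^2)=\log^{o(1)}N$ and split $n$ over residues $r\bmod q$; with $R(\mathbf a)=\{r\in(\Z/q\Z)^d:a_i^2\mid\psi_i(r)\ \forall i\}$, the substitution $n=qm+r$ turns the $r$-part into $\sum_{m\in K_r\cap\Z^d}\prod_i\Lambda(\psi_i^{(r)}(m))$, where $\psi_i^{(r)}=q\dot\psi_i+\psi_i(r)+1$ and $K_r=\{x:qx+r\in K\}$ is a convex body in $[-2N,2N]^d$ of volume $\Vol(K)/q^d\gg N^d\log^{-A}N$, with $\Psi^{(r)}(K_r)\subset[2,\infty)^t\subset\R_+^t$. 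The system $\Psi^{(r)}$ keeps the same $d,t$, is of finite complexity (the $q\dot\psi_i$ are pairwise non-proportional), has linear coefficients $O(q)=\log^{o(1)}N$ and constant coefficients $O(N)$, so $\nor{\Psi^{(r)}}_{N,B}=O(1)$ for a fixed $B$. For $p\nmid q$, translation invariance gives $\beta_p(\Psi^{(r)})=\E_a\prod_i\Lambda_p(\psi_i(a)+1)=:\tilde\gamma_p\ne0$; hence $\Psi^{(r)}$ is admissible \emph{unless} $p\mid q$ and $p\mid\psi_i(r)+1$ for some $i$. In that exceptional case the $i$-th factor $\Lambda(q\dot\psi_i(m)+\psi_i(r)+1)$ is supported on the finitely many affine hyperplanes $q\dot\psi_i(m)+\psi_i(r)+1=p^k$, so the $r$-part is $O((N/q)^{d-1}\log^{O(1)}N)$, and over the $\le q^d$ residues and $\le z^t$ tuples these bad pieces contribute $O(N^{d-1}\log^{o(1)}N)=o(N^d)$.

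For the admissible residues, Theorem~\ref{mytrm} applies, and crucially with parameters $d,t,A,B$ and a constant $L=O(1)$ that are \emph{the same for every $\mathbf a$ and $r$}; hence a single error function $\epsilon(N)=o_{d,t,A,B,L}(1)$ controls all of them at once, yielding
\[
S(\mathbf a)=\Vol(K)\,T(\mathbf a)\bigl(1+O(\epsilon(N))\bigr)+O(N^{d-1}\log^{O(1)}N),\qquad T(\mathbf a):=q^{-d}\sum_{r\in R(\mathbf a)}\prod_p\beta_p(\Psi^{(r)}).
\]
A routine sieve estimate ($|R(\mathbf a)|\ll q^d\prod_ia_i^{-2}\cdot\log^{o(1)}N$, together with a crude bound on $\prod_p\beta_p(\Psi^{(r)})$) gives the absolute bound $|T(\mathbf a)|\ll_{d,t}\prod_ia_i^{-3/2}$. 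Summing over $\mathbf a$ with weights $\prod_i\mu(a_i)$, the error is $\ll\epsilon(N)\Vol(K)\sum_{\mathbf a}\prod_ia_i^{-3/2}+N^{d-1}\log^{O(1)}N=o(N^d)$, while the main terms combine, by the Chinese remainder theorem and the standard rearrangement of an Euler product (for each $p$ the weights $\mu(a_i)$ with $p\mid a_i$ reconstruct the factors $\1_{p^2\nmid\psi_i}$, turning $\tilde\gamma_p$ into $\gamma_p$), into $\Vol(K)\prod_p\gamma_p\,(1+o_z(1))$ as $z\to\infty$. Collecting the three $o(N^d)$ errors yields \eqref{eq:appliSQF} with $C(\Psi)=\prod_p\gamma_p$. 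The main obstacle is exactly this final bookkeeping: because the $o(1)$ in Theorem~\ref{mytrm} is ineffective, one cannot afford more than $O(1)$ subproblems after taking absolute values, which is what forces $z$ to grow slowly and makes the absolute convergence of $\sum_{\mathbf a}T(\mathbf a)$ — rather than any power-of-$\log$ saving — the essential input; a lesser nuisance, handled above, is that the translated systems $\Psi^{(r)}$ need not be admissible.
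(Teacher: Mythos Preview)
Your argument is correct and follows the same overarching plan as the paper---expand $\mu^2$ via $\sum_{a^2\mid m}\mu(a)$, truncate the $a_i$, and feed each piece into Theorem~\ref{mytrm}---but the implementation in the middle is genuinely different. The paper, after fixing $\mathbf a$, parametrises the full-rank lattice $L_{\mathbf a}=\{n:a_i^2\mid\psi_i(n)\ \forall i\}$ by a reduced $\Z$-basis (via Mahler's theorem and Minkowski's second theorem) and applies Theorem~\ref{mytrm} \emph{once} to the composed system $\Phi^{\mathbf a}$ on the transformed body $K_{\mathbf a}$; it truncates at $a_i\le\log^C N$ and leaves the constant $C(\Psi)$ as the (non-factored) absolutely convergent series $\sum_{\mathbf a}\alpha_\Psi(a_1^2,\dots,a_t^2)\prod_p\beta_p(\mathbf a)\prod_i\mu(a_i)$. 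You instead split into residues $r\bmod q$ with $q=\lcm(a_i^2)$, apply Theorem~\ref{mytrm} to each translated system $\Psi^{(r)}$, and then reassemble via the Chinese remainder theorem into the explicit Euler product $C(\Psi)=\prod_p\gamma_p$. Your route avoids the geometry of numbers entirely and yields a much cleaner form for $C(\Psi)$; the paper's route applies the main theorem fewer times per $\mathbf a$ and accommodates larger truncation heights. Both rely, at the crucial summation step, on the same mechanism: the $o(1)$ from Theorem~\ref{mytrm} is uniform in the parameters $d,t,A,B,L$, and absolute convergence of the sum of main terms over $\mathbf a$ absorbs the multiplicative error.

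Two small points. First, the bound $|T(\mathbf a)|\ll\prod_ia_i^{-3/2}$ is not correct in general (take all $a_i$ equal); what actually holds, and what suffices for the absolute convergence you need, is $T(\mathbf a)\ll\lcm(a_i)^{-2}(\log\log\lcm(a_i))^{O(1)}$, coming from $|R(\mathbf a)|/q^d=\alpha_\Psi(a_1^2,\dots,a_t^2)\ll\lcm(a_i)^{-2}$ together with $\prod_{p\mid q}\beta_p(\Psi^{(r)})\ll(\log\log q)^{O(1)}$, and then $\sum_{\mathbf a}\lcm(a_i)^{-2+\epsilon}<\infty$ by grouping tuples with a common lcm. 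Second, your closing remark that one ``cannot afford more than $O(1)$ subproblems'' is not quite the right diagnosis: both you and the paper have unboundedly many subproblems, and what makes the uniform $o(1)$ harmless is precisely that $\sum_{\mathbf a}T(\mathbf a)=O(1)$; your very slow choice of $z$ is therefore convenient but not essential.
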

The constant $C(\Psi)$ will appear explicitly in the proof, but
its expression is unpleasant, so we do not give it here.
Throughout the proof of this theorem, we will need the notation
$$
\alpha_\Psi(k_1,\ldots,k_t)=\E_{a\in (\Z/m\Z)^d}\prod_{i\in[t]}1_{k_i\mid\psi_i(a)}
$$
where $m=\lcm(k_1,\ldots,k_t)$.
Elementary convex geometry reveals that this is the density of points of the lattice $\{n\in\Z^d : \forall i\in[t]\quad k_i\mid\psi_i(n)\}$ per unit volume, in the sense that
for any convex body $K\subset [-B,B]^d$, we have
that
\begin{equation}
\label{volPacking}
\sum_{n\in K\cap\Z^d}\prod_{i\in[t]}1_{d_i\mid\psi_i(n)}
=\Vol(K)\alpha_\Psi(d_1,\ldots,d_t)+O(B^{d-1}\lcm(d_1,\ldots,d_t)).
\end{equation}
This follows from simple volume packing arguments (see \cite[Appendix A]{GT2}, \cite[Appendix C]{polyprog}).
We now prove Theorem \ref{appliSQF}.
\begin{proof}
We insert the formula $\mu^2(n)=\sum_{a^2\mid n}\mu(a)$
in the left-hand side of \eqref{eq:appliSQF}. Thus
\begin{equation}
\label{eq:ais}
\sum_{n\in K\cap\Z^d}\prod_{i\in[t]}F(\psi_i(n))
=\sum_{(a_1,\ldots,a_t)\in\N^t}\prod_{i\in[t]}\mu(a_i)\sum_{\substack{n\in K\cap\Z^d\\ \forall i\in[t]\, a_i^2\mid\psi_i(n)}}
\prod_{i\in[t]}\Lambda(\psi_i(n)+1).
\end{equation}
Now for any $a=(a_1,\ldots,a_t)\in\N^t$, we introduce the set
$$
L_a=\{n\in\Z^d : \forall i\in[t]\quad a_i^2\mid\psi_i(n)\}.
$$
%is empty or an affine sublattice $L_a$ of $\Z^d$. 
Fix an $a$ for which
$L_a\neq \emptyset$ and let $n_0\in L_a$. 
Then
$$
L_{a}=n_0+\bigcap_{i=1}^t\ker g_i
$$
where $g_i:\Z^d\rightarrow\prod_{i\in[t]}\Z/a_i^2\Z$ is the affine-linear map obtained by applying $\dot{\psi_i}$ and then reducing modulo $a_i^2$. So $L_a$ is an affine sublattice of full rank: indeed, its direction contains $\{\prod_ia_i^2e_1,\ldots,\prod_ia_i^2e_d\}$.
Incidentally, this means that we can suppose that $n_0$ satisfies
$n_0\cdot e_i \in[\prod_ia_i^2]$, in particular
$\nor{n_0}\leq d\log^{2dC}$.

As a lattice of full rank, the direction $\overrightarrow{L_a}$
of $L_a$ has a $\Z$-basis: there exist $f_1,\ldots,f_d$ such that 
$$
L_{a}=\{n_0+\sum_{i=1}^dm_if_i\mid (m_1,\ldots,m_d)\in\Z^d\}.
$$
Because of a theorem of Mahler, we can assume that $\nor{f_i}\leq i\lambda_i$ for $i=1,\ldots,d$, where
$\lambda_1\leq \cdots\leq \lambda_d$ are the successive minima of the lattice $\overrightarrow{L_a}$ with respect to the Euclidean unit ball. 
Let $R^a$ be the affine transformation of $\R^d$ defined by $R^a(0)=n_0$ and $\dot{R^a}(e_i)=f_i$ for each $i\in[d]$.
Note that $L_a\cap K=R^a(\Z^d\cap K_a)$ where $K_a$ is also a convex body.
For the notions of geometry of numbers alluded to here, see for instance the notes of Green \cite{geonumbers}.

Now if one of the $a_i$ is larger than $\log^C N$, then
$K_a$ is small. Indeed, the set
of $n\in K\cap\Z^d$ such that there exists $i\in [t]$ and $a_i>\log^C N$ satisfying $a_i^2\mid \psi_i(n)$ has $O(N^d\log^{-C}N)$ elements. 
This follows from equation \eqref{volPacking} combined with the bound
$\alpha_{\psi_i}(a_i^2)\ll a_i^{-2}$, obtained by multiplicativity, linear algebra (for instance \cite[Corollary C.4]{bienv}) and the fact that the coefficients of $\psi_i$ are bounded, and finally $\sum_{a>x}a^{-2}\ll x^{-1}$.
Bounding the contribution to the left-hand side of \eqref{eq:ais} of this exceptional set of $n\in K\cap\Z^d$ using $F\ll\log$, and supposing that $C\geq 2t$, we obtain 
\begin{align*}
\sum_{n\in K\cap\Z^d}\prod_{i\in[t]}F(\psi_i(n))
&=\sum_{\substack{n\in K\cap \Z^d\\ \forall i\in[t]\,\forall a>\log^C N\, a^2\nmid \psi_i(n)}}\prod_{i\in[t]}F(\psi_i(n))+
O(N^d\log^{-C/2}N)\\
&=\sum_{1\leq a_1,\ldots,a_t\leq \log^C N}\prod_{i\in[t]}\mu(a_i)\sum_{n\in K\cap L_a}
\prod_{i\in[t]}\Lambda(\psi_i(n)+1)+
O(N^d\log^{-C/2}N).
\end{align*}

For each $i\in[t]$, the map $\psi_i^a : L_a\rightarrow\Z$ defined by
$$
\psi_i^a(n)=\frac{\psi_i(n)}{a_i^2}
$$
is an affine map.  Then introduce $\phi_i^a=\psi_i^a\circ R^a$.
This defines a system $\Phi^a : \Z^d\rightarrow\Z^t$ of
affine-linear forms which is again of finite complexity. Thus the
inner sum in the left-hand side of equation \eqref{eq:ais} may be written as
\begin{equation}
\label{innersum}
\sum_{n\in K\cap L_a}
\prod_i\Lambda(\psi_i(n)+1)=
\sum_{m\in K_a\cap\Z^d}\prod_i\Lambda(a_i^2\phi_i^a(m)+1).
\end{equation}
We now apply Theorem \ref{mytrm} to the inner sum.
One can check that the linear coefficients of $\Phi_a$ have size
$O(\log^{O(1)}N)$. To do this, it is enough to examine the size of
the basis vectors $f_j$ of the lattice $\overrightarrow{L_a}$. Indeed,
$$
a_i^2\abs{\dot{\phi_i^a}(e_j)}=\abs{\dot{\psi_i}(f_j)}\leq \nor{\dot{\psi_i}}\nor{f_j}\ll\nor{f_j}.
$$
Moreover, the constant coefficients are $O(N)$.
As observed,
if $n_0\in L_a$, the lattice
$$
\{n_0+\sum_{i\in[d]}k_ia_i^2e_i\mid k\in\Z^d\}
$$
is a sublattice of $L_a$ and its determinant is
$\prod_i a_i^2\leq \log^{2dC}N$. Hence using Minkowski's second theorem, one finds that
$$
\prod_{i\in[d]} \nor{f_i}\leq d!\prod_{i\in[d]}\lambda_i\ll_d \abs{\det L_a}\leq \log^{2dC}N. 
$$
Similarly, we obtain the bound
$$
\Vol(K_{a})=\Vol(K)\det (R^a)^{-1}\geq\Vol(K)\log^{-2dC}N.
$$

Now Theorem \ref{mytrm} tells us that
the right-hand side of \eqref{innersum} is $\Vol(K_a)\prod_p\beta_p(1+o(1))$ as
soon as none of the local factors $\beta_p(a)$ corresponding to the system of the forms
$a_i^2\phi_i^a+1$ vanishes. 
Note that if any $\beta_p(a)$ is 0, then for all $m$ there exists $i\in[t]$ such that $p$ divides $a_i^2\phi_i^a(m)+1$. In this case,
the expression $a_i^2\phi_i^a(m)+1$ cannot be prime unless it is equal to $p$, which easily implies that
$$
\sum_{m\in K_a\cap\Z^d}\prod_i\Lambda(a_i^2\phi_i^a(m)+1)
=O(N^{d-1}\log^{O(1)}N).
$$
Moreover, equation \eqref{volPacking} reveals that
$$
\Vol(K_a)=\abs{K_a\cap\Z^d}+O(N^{d-1})=\abs{K\cap L_a}+O(N^{d-1})
=\Vol(K)\alpha_\Psi(a_1^2,\ldots,a_t^2)+O(N^{d-1}\log^{O(1)}N).
$$
Thus the left-hand side of equation \eqref{eq:appliSQF} equals
$$
\Vol(K)(1+o(1))\sum_{1\leq a_1,\ldots,a_t\leq \log^C N}\alpha_\Psi(a_1^2,\ldots,a_t^2)\prod_p\beta_p(a)\prod_{i\in[t]}\mu(a_i)+O(N^d\log^{-C/2}N).
$$
We claim the sum over $a$ is absolutely convergent.
To see this, first observe that
the exceptional primes for the system of the forms $a_i^2\phi_i^a+1$ are divisors of
$a_i^2$ or exceptional primes for the system $\Phi^a$;
in any case, they are divisors\footnote{See equation \eqref{minors} and the remark following it.} of a parameter $Q(a)=O(\prod_ia_i^{O(1)})$.
For all other primes, we have $\beta_p=1+O(p^{-2})$ by Lemma \ref{lm:convBetap}, so that
$$
\prod_p\beta_p(a)\ll\prod_{p\mid Q(a)}\beta_p(a)\leq \left(\frac{Q(a)}{\varphi(Q(a))}\right)^t\ll (\log\log Q(a))^t\ll
(\log\log\prod_ia_i)^t.
$$
Then note that the sum
$$
\sum_{a_1,\ldots,a_t}(\log\log\prod_ia_i)^t\alpha_\Psi(a_1^2,\ldots,a_t^2)
$$
is convergent. Indeed, we have the bound
$$
\alpha_{\Psi}(a_1^2,\ldots,a_t^2)=\prod_p\alpha_\Psi(p^{2v_p(a_1)},\ldots,p^{2v_p(a_t)})
\ll
\prod_pp^{-2v_p(\max_i a_i)}=\lcm(a_1,\ldots,a_t)^{-2},
$$
where the inequality holds because the forms
$\psi_i$ have bounded linear coefficients and
if $p\nmid\psi_i$, then $\alpha_{\psi_i}(p^k)\leq p^{-k}$ (this is linear algebra and Hensel's lemma, see \cite[Corollary C.4]{bienv}). The convergence then follows from a trivial bound for the number of $t$-tuples $a$ of prescribed least common multiple $k$, namely $\tau(k)^t$.
This convergence result implies that
$$\sum_{1\leq a_1,\ldots,a_t\leq \log^C N}\alpha_\Psi(a_1^2,\ldots,a_t^2)\prod_p\beta_p(a)\prod_{i\in[t]}\mu(a_i)
=C(\Psi)+o(1),
$$
where
$$
C(\Psi)=\sum_{(a_1,\ldots,a_t)\in\N^t}\alpha_\Psi(a_1^2,\ldots,a_t^2)\prod_p\beta_p(a)\prod_{i\in[t]}\mu(a_i).
$$ This concludes the proof.

%At present $\beta_p(a)$ might look like it depends of the choice the isomorphism $R^a$ between $L_a$ and $\Z^d$ but in fact, we can
%see that it can be written as
%$$
%\E_{b\in L_a/pL_a}\prod_{i\in[t]}\Lambda_p(\psi_i(b)+1)
%$$
%which clearly depends only on $a$.
\end{proof}

%Prove \ref{PRM}

\end{document}